\documentclass[lettersize,journal]{IEEEtran}
\usepackage{amsmath,amsfonts}
\usepackage{algorithmic}
\usepackage{algorithm}
\usepackage{array}
\usepackage[caption=false,font=normalsize,labelfont=sf,textfont=sf]{subfig}
\usepackage{textcomp}
\usepackage{stfloats}
\usepackage{url}
\usepackage{verbatim}
\usepackage{graphicx}
\usepackage{cite}
\usepackage{amssymb}
\newtheorem{proposition}{Proposition}
\newtheorem{lemma}{Lemma}
\newtheorem{remark}{Remark}
\begin{document}

\title{Gaussianization-Based Parameter Estimation for Gamma–Gamma and Lognormal–Rician Turbulence Channels}

\author{{Maoke Miao, Bo Liu,
        Xinyu Zhang,  and Xiao-Yu Chen}
\thanks{This work was supported in part by the National Natural Science Foundation of China (Grant 61871347, Grant 12405026); in part by the Natural Science Foundation of Zhejiang Province (Grant LQN25F010019)\emph{(Corresponding authors: Maoke Miao)}.\\
\indent Maoke~Miao is with the Foundation Science Education Center, Hangzhou City University, 310015, China. Bo Liu and Xiao-Yu Chen are with the School of Information and Electrical Engineering, Hangzhou City University, Hangzhou 310015, China (e-mail: maokemiao@zju.edu.cn; liubo@hzcu.edu.cn; chenxiaoyu@hzcu.edu.cn;). \\
\indent Xinyu Zhang is with the
Department of Communications and Networking, Aalto University, Espoo 02150, Finland (xinyu.1.zhang@aalto.fi).}}

%

\maketitle

\begin{abstract}
Accurate parameter estimation for atmospheric turbulence channels is challenging because the probability density functions of the Gamma–Gamma (GG) and Lognormal–Rician (LR) models involve special functions and numerical integrations. This paper proposes two Gaussianization parameter estimators for GG and LR turbulence channels, i.e., the quantile-transformation (QT) estimator and the Box–Cox estimator. The QT estimator employs bidirectional cross-transformation together with higher-order statistical matching, whereas the Box–Cox estimator constructs an approximate likelihood by incorporating the transformation Jacobian. Asymptotic analysis identifies skewness as the leading-order deviation from Gaussianity under weak-to-moderate turbulence and yields asymptotic expressions for the Box–Cox power parameters. In addition, a physics-informed regularizer based on the extended Rytov-theory mapping from the Rytov variance to the GG shape parameters is introduced to improve parameter identifiability. Simulation results demonstrate that both estimators provide robust performance under noiseless and noisy conditions, while the physics-informed regularization term can improve the parameter estimation performance by at least three orders of magnitude compared with the iterative moment-based estimator and the method-of-moments/convex-optimization estimator in specific turbulence scenarios.
\end{abstract}

\begin{IEEEkeywords}
Atmospheric turbulence channels, parameter estimation, Gaussianization, Box–Cox transformation, Gamma–Gamma distribution,  Lognormal–Rician distribution,  physics-informed regularization\end{IEEEkeywords}

\section{Introduction}
\IEEEPARstart{S}{ixth}-generation (6G) communication networks are envisioned
to provide ubiquitous connectivity by integrating terrestrial,
airborne, and satellite communication networks.
To meet  the  requirements for high-capacity fronthaul/backhaul connectivity,  secure  data transmission,
optical   and quantum communications have
been identified as promising enabling technologies for future
6G networks \cite{chowdhury2020sixg,jeon2023fso6g,
zeydan2025quantum6g}. Free-space optical (FSO) links offer license-free spectrum, strong spatial directivity, and rapid deployment for terrestrial backhaul and non-terrestrial connectivity, while also enabling long-distance quantum communication beyond the reach of direct fiber transmission, as demonstrated by satellite-to-ground QKD \cite{jeon2023fso6g,liao2017satelliteqkd}.
Beyond information transmission, FSO links
have enabled high-precision time and frequency transfer, with
femtosecond-level timing deviation and fractional frequency
instability below \(10^{-18}\) demonstrated over a
kilometer-scale atmospheric link
\cite{giorgetta2013opticaltwoway}.

Despite these advantages, optical propagation through the
atmosphere is inevitably affected by random fluctuations of
the refractive index caused by variations in temperature and
pressure. The resulting atmospheric turbulence gives rise to
turbulence-induced effects such as irradiance scintillation and beam wandering. These effects induce random fluctuations in the received
irradiance, which equivalently characterizes the instantaneous
channel transmittance under the normalized channel model. Accurate statistical characterization of these fluctuations is thus essential for evaluating link performance.

A variety of statistical channel models have been developed to characterize turbulence-induced irradiance fluctuations,
which can be  classified according to the number of
shaping parameters. Single-parameter models, such as the
Lognormal and negative-exponential distributions, offer low
computational complexity but are generally applicable only
to weak turbulence and the scintillation-saturation regime,
respectively \cite{phillips1981measured}.
The
M\'alaga and modified M\'alaga models, commonly characterized by four physical parameters, provide greater modeling flexibility and encompass several classical turbulence models as special cases, but their relatively large parameter sets substantially increase the estimation
complexity \cite{miao2024modifiedmalaga}.
Two-parameter models therefore provide an attractive
compromise between modeling accuracy and estimation
tractability. Among them, the Gamma--Gamma (GG) and
Lognormal--Rician (LR) distributions are particularly prominent \cite{alhabash2001gamma,churnside1989lognormalrician}.

For the GG distribution, direct
maximum-likelihood estimation is challenging because its PDF
contains a modified Bessel function of the second kind, and
the associated optimization may involve derivatives with
respect to both its argument and order. A maximum-likelihood
estimator based on numerical optimization was studied in
\cite{kazeminia2013ggml}, while an expectation-maximization
(EM) algorithm was subsequently developed in
\cite{chen2021ggem}. To avoid direct likelihood evaluation, Wang and Cheng developed a method-of-moments/convex-optimization (MoM/CVX) estimator \cite{wang2010ggestimation}. Their
modified scheme further used the physical relationship between
the two shape parameters to refine one parameter after the
moment-based estimates had been obtained. Generalized
fractional-moment estimators were developed in
\cite{lim2018generalizedgg}, and their limiting optimal forms
were derived in \cite{kim2022optimalgg}. More recently, Kim and Yoon proposed
an iterative weighted-moment estimator (IWME) based on the first five
moments of noisy observations
\cite{kim2025ggnoise}. Although these methods have
considerably advanced GG parameter estimation,
the weak-turbulence regime remains difficult because large
shape parameters may generate very similar distribution
profiles, resulting in weak parameter identifiability and
relatively flat objective-function regions.

Parameter estimation for the LR distribution
is even more challenging because its PDF is expressed through an
integral containing a modified Bessel function of the first
kind. Song and Cheng introduced a generalized method of
moments (GMM) to jointly estimate the coherence parameter and
the Lognormal variance \cite{song2012lrgmm}. This method
avoids explicit likelihood integration and can be applied to
both noiseless and noisy observations, but satisfactory
performance may require up to \(10^6\) acquired samples, leading to an excessive data-acquisition
latency that is unsuitable for real-time communication systems.
An EM maximum-likelihood estimator was developed in
\cite{yang2015lrem}, which reduces the required observation
size at the cost of repeated numerical integration. A
saddlepoint approximation (SAP) was employed in
\cite{miao2020lrsap} to replace the original integral-form
likelihood with a tractable approximation, although
saddlepoint calculations and Bessel-function evaluations
remain necessary. More recently, the authors developed a data-generation estimator using $k$-nearest-neighbor density approximation \cite{Miao25}, thereby avoiding direct evaluation of the integral-form PDF by constructing a nonparametric likelihood approximation from generated samples.

Building on the  data-driven nature of the $k$-nearest-neighbor
estimator, we ask whether the observations can be mapped into a more tractable domain to facilitate parameter estimation.
The Gaussian domain is a natural choice because Gaussian
log-likelihoods and the Kullback-Leibler divergence between
Gaussian distributions can be readily evaluated in closed form.
Moreover, Gaussian reference distributions are widely used
in modern generative modeling, where complex distributions
are constructed from or progressively transformed toward
Gaussian variables, as exemplified by variational autoencoders
and denoising diffusion probabilistic models
\cite{kingma2014vae,ho2020ddpm}. Motivated by these
properties, we develop two Gaussianization parameter estimators, namely, the QT and Box–Cox estimators, for GG and
LR turbulence channels in the presence of
additive Gaussian noise.  The main contributions are
summarized as follows:

\begin{itemize}
\item  We first investigate the applicability of the QT and Box-Cox transformations to GG and LR turbulence channels. Higher-order analysis identifies skewness as the leading-order deviation from Gaussianity under weak-to-moderate turbulence, from which asymptotic expressions for the Box-Cox power parameters are derived. The Gaussianization accuracy of both transformations is evaluated using Kolmogorov--Smirnov (KS) tests for noiseless and noisy observations.

\item We develop two corresponding parameter estimators. The
QT estimator employs bidirectional cross-transformation and
higher-order statistical matching, whereas the Box–Cox
estimator constructs an approximate likelihood using the
transformation Jacobian.  To the best of our knowledge, this is the first study of Gaussianization-based parameter estimators for both considered turbulence channels, encompassing both the nonparametric QT and parametric Box–Cox approaches.

\item  We reveal that different GG shaping parameters can yield similar distributions, particularly under weak turbulence, thereby weakening parameter identifiability. To address this issue, we introduce a physics-informed regularizer based on the Rytov-variance-induced relationship between the two parameters. To the best of our knowledge, this is the first use of this relationship as a physics-informed regularizer in the objective function for GG parameter estimation. Under the considered turbulence conditions, the introduced regularization term can improve the estimation performance by at least three orders of magnitude compared with the IWME and the MoM/CVX in specific turbulence
scenarios.

\end{itemize}

The remainder of this paper is organized as follows. Section II presents the two turbulence-channel models and analyzes their statistical properties, physical parameter relationships, and Gaussianization performance under the QT and Box–Cox transformations. Section III develops the proposed estimators, including physics-informed regularization for GG parameter estimation. Section IV evaluates their performance under different sample sizes, turbulence conditions, and noise levels and compares them with benchmark estimators. Finally, Section V concludes the paper.

\section{Preliminaries}

\subsection{FSO Channel Models}
\indent Let \(I\) denote the normalized received irradiance such that
\(\mathbb{E}[I]=1\). To characterize the combined effects of large- and small-scale atmospheric fluctuations, we consider two widely used turbulence models, the GG  and LR distributions. Their statistical definitions and relevant
properties are presented below. \\
\indent 1) \emph{GG distribution}: According to \cite{alhabash2001gamma}, the PDF of the GG random variable $I$ and its $k$-th moment $\mu_k$ are respectively given by
\begin{equation}\label{eq1}
  f_{G G}(I;\alpha,\beta)=\frac{2(\alpha \beta)^{\frac{\alpha+\beta}{2}}}{ \Gamma(\alpha) \Gamma(\beta)}\left(I\right)^{\frac{\alpha+\beta}{2}-1} K_{\alpha-\beta}\left(2 \sqrt{\alpha \beta I}\right),
\end{equation}
\begin{equation}\label{eqAdd1}
  \mu_{GG,k}=\mathbb{E}\left[I^k\right]=\frac{\Gamma(\alpha+k) \Gamma( \beta+k)}{\Gamma(\alpha) \Gamma(\beta)}\left(\frac{1}{\alpha \beta}\right)^k,
\end{equation}
where $\alpha$ and $\beta$ are shape parameters representing the effective number of large-scale and small-scale turbulent cells, respectively, $\Gamma\left(\cdot\right)$ is the Gamma function \cite[eq. (8.310.1)]{gradshteyn2007}, and $K_v\left(\cdot\right)$ is the $v$-th order
modified Bessel function of the second kind \cite[eq. (8.407.1)]{gradshteyn2007}. It can be found from  (\ref{eq1}) that interchanging $\alpha$ and $\beta$ gives the same model due to the symmetry property $K_{\alpha - \beta}\left(\cdot\right) = K_{\beta - \alpha}\left(\cdot\right)$. Therefore, without loss of generality, we assume $\alpha\ge\beta$. \\
\indent According to \cite{Chattamvelli2023}, skewness and excess kurtosis are commonly employed to characterize higher-order deviations of a given PDF from the normal Gaussian distribution, where skewness quantifies distributional asymmetry, whereas excess kurtosis quantifies the relative heaviness of the distribution tails. For the  GG distribution, they are defined as the standardized third-order central moment and the standardized fourth-order central moment minus three, respectively, i.e.,
\begin{equation}\label{eqAdd1_1}
  \gamma_{1, \mathrm{GG}}=\mathbb{E}\left[\left(\frac{I-\mu_{\mathrm{GG}}}{\sigma_{\mathrm{GG}}}\right)^3\right],\gamma_{2, \mathrm{GG}}=\mathbb{E}\left[\left(\frac{I-\mu_{\mathrm{GG}}}{\sigma_{\mathrm{GG}}}\right)^4\right]-3,
\end{equation}
where $\mu_{GG}$ and $\sigma_{GG}$ denote the mean and standard deviation of the GG distribution, respectively. Using  (\ref{eqAdd1}), it can be readily verified that $\gamma_{1,GG} > 0$ for all $\alpha$ and $\beta$, indicating  the GG distribution is positively skewed.  In particular, as established in the following proposition, skewness becomes the dominant higher-order contribution in the relatively large-parameter regime, corresponding to a trend toward
weaker turbulence.
\begin{proposition}
\label{propskewness1}
For sufficiently large $\alpha, \beta$, the skewness
and excess kurtosis of the GG distribution are
asymptotically expressed as
\begin{equation}
\gamma_{1,\mathrm{GG}}
=
\frac{2\left(\tau^{2}+3\tau+1\right)}
{\sqrt{\tau}\left(\tau+1\right)^{3/2}}
\frac{1}{\sqrt{\beta}}
+
\mathcal{O}\!\left(\beta^{-3/2}\right),
\label{eq:GG_skewness_asymptotic}
\end{equation}
and
\begin{equation}
\gamma_{2,\mathrm{GG}}
=
\frac{6\left(\tau^{2}+5\tau+1\right)}
{\tau\left(\tau+1\right)}
\frac{1}{\beta}
+
\mathcal{O}\!\left(\beta^{-2}\right),
\label{eq:GG_kurtosis_asymptotic}
\end{equation}
where $\tau=\alpha/\beta \geq 1$, and $\mathcal{O}(\beta^{-p})$ denotes
the asymptotic order of a remainder term whose magnitude is
bounded by a constant multiple of $\beta^{-p}$ \cite{Hentila2024}.
\end{proposition}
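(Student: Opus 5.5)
The plan is to work directly with the closed-form moments in (\ref{eqAdd1}) and expand them in the small quantities $a=1/\alpha$ and $b=1/\beta$, holding $\tau=\alpha/\beta\ge 1$ fixed so that $a=b/\tau$. Every quantity is then a polynomial or rational function of the single small variable $b$. From (\ref{eqAdd1}), $\mu_{GG,k}=\prod_{j=0}^{k-1}(1+ja)(1+jb)$. In particular $\mu_{GG}=\mu_{GG,1}=1$ and
\[
\mu_{GG,2}=(1+a)(1+b),\quad \mu_{GG,3}=(1+a)(1+2a)(1+b)(1+2b),
\]
with the analogous four-factor product for $\mu_{GG,4}$.

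The first step is to form the central moments. Since $\mu_{GG}=1$, they are $m_2=\mu_{GG,2}-1$, $m_3=\mu_{GG,3}-3\mu_{GG,2}+2$ and $m_4=\mu_{GG,4}-4\mu_{GG,3}+6\mu_{GG,2}-3$. Expanding the products exactly gives the variance
\[
\sigma_{GG}^2=a+b+ab .
\]
For the third central moment, the constant and linear terms cancel, and the leading surviving part is
\[
m_3=2a^2+6ab+2b^2+\mathcal{O}(b^3).
\]
A useful cross-check is that this equals the third cumulant: it is the sum of the gamma cumulants $2a^2$ and $2b^2$, plus the interaction $6ab$ produced by multiplying the two independent unit-mean gamma factors.

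Substituting $a=b/\tau$ gives
\[
\sigma_{GG}^2=\frac{\tau+1}{\tau}\,b\,\bigl(1+\mathcal{O}(b)\bigr),\qquad m_3=\frac{2(\tau^2+3\tau+1)}{\tau^2}\,b^2\,\bigl(1+\mathcal{O}(b)\bigr).
\]
Then $\gamma_{1,GG}=m_3/\sigma_{GG}^3$ equals $\frac{2(\tau^2+3\tau+1)}{\sqrt{\tau}(\tau+1)^{3/2}}\,b^{1/2}\,\bigl(1+\mathcal{O}(b)\bigr)$. Because $b=\beta^{-1}$, this is exactly (\ref{eq:GG_skewness_asymptotic}) with remainder $\mathcal{O}(\beta^{-3/2})$.

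For the excess kurtosis I will write $m_4=3m_2^2+\kappa_4$, so that $\gamma_{2,GG}=\kappa_4/\sigma_{GG}^4$. The task is then to show that all terms of $m_4-3m_2^2$ of order below three in $(a,b)$ cancel, leaving $\kappa_4$ as a cubic form in $(a,b)$ plus $\mathcal{O}(b^4)$. I would read off that cubic form from the expansion and check that, after substituting $a=b/\tau$, it equals
\[
\kappa_4=\frac{6(\tau+1)(\tau^2+5\tau+1)}{\tau^3}\,b^3 .
\]
Equivalently, the cubic form is $6(a+b)(a^2+5ab+b^2)$. Two sanity checks are available: setting $a=0$ must recover the gamma value $6b^3$, and the cubic must be symmetric in $a$ and $b$. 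Dividing by $\sigma_{GG}^4=\frac{(\tau+1)^2}{\tau^2}\,b^2\,\bigl(1+\mathcal{O}(b)\bigr)$ then gives (\ref{eq:GG_kurtosis_asymptotic}) with remainder $\mathcal{O}(\beta^{-2})$.

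I expect the main obstacle to be the bookkeeping for $m_4$. The quantity $\mu_{GG,4}$ expands to a degree-six polynomial in $(a,b)$, and the constant, linear and quadratic parts must all cancel against $3m_2^2$. Any slip there corrupts the leading coefficient. To keep this manageable, I would organize the expansion by total degree in $(a,b)$ and verify the cancellation degree by degree.

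Finally, the remainder claims rest on the fact that every ratio above is an analytic function of $b$ near $b=0$ for fixed $\tau$. Its relative corrections are therefore genuinely of order $b$, and the constants implied in $\mathcal{O}(\cdot)$ may be taken uniform for $\tau$ in any compact subset of $[1,\infty)$.
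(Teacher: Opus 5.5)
Your argument is correct, and it takes a more elementary route than the paper's.

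The paper first proves a Stirling-series lemma for $R_q(a)=\Gamma(a+q)/(\Gamma(a)a^q)$ to order $a^{-3}$, valid for arbitrary real $q$. It then expands $M(q)=R_q(\tau\beta)R_q(\beta)$. You instead use the fact that for integer $k$ this ratio is the exact finite product $\prod_{j<k}(1+j/\alpha)$. Hence $\sigma_{\mathrm{GG}}^2=a+b+ab$, $m_3$ and $m_4$ are exact polynomials in $(a,b)$, and no asymptotic series is needed.

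The forms you claim check out. Specifically, $m_3=2a^2+6ab+2b^2+6a^2b+6ab^2+4a^2b^2$. The part of $m_4-3m_2^2$ of degree at most two vanishes. Its cubic part is $6a^3+36a^2b+36ab^2+6b^3=6(a+b)(a^2+5ab+b^2)$, which agrees with the paper's $\mu_{4,\mathrm{GG}}$ and $\kappa_{4,\mathrm{GG}}$.

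What your version buys is that every remainder is an explicit polynomial. Since $a=b/\tau\le b$ for $\tau\ge 1$, the implied constants in the $\mathcal{O}(\cdot)$ terms are uniform over all $\tau\ge 1$, not merely on compact subsets as you state.

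What the paper's lemma buys is coverage of non-integer orders. The paper reuses this in Appendix~D for the Box--Cox moments $M(k\lambda)$ with $0<\lambda<1$, where no finite-product formula is available.
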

\begin{IEEEproof}
The proof is provided in Appendix~A.
\end{IEEEproof}

\indent  Furthermore, according to \cite{Ahmadi10}, when either shape parameter becomes sufficiently large, the GG distribution can be well approximated by a simpler Gamma distribution. Consequently, the GG PDF becomes progressively less sensitive to further increases in the larger parameter, such that different parameter combinations may yield similar PDF shapes. This behavior is expected to produce a relatively flat Kullback-Leibler divergence surface along the direction of the increasing shaping parameter. It should be emphasized that this intrinsic weak-identifiability property  of the GG model has not been  demonstrated in previous  studies. \\
\begin{figure}[h]
  \centering
  \includegraphics[width=3.5in]{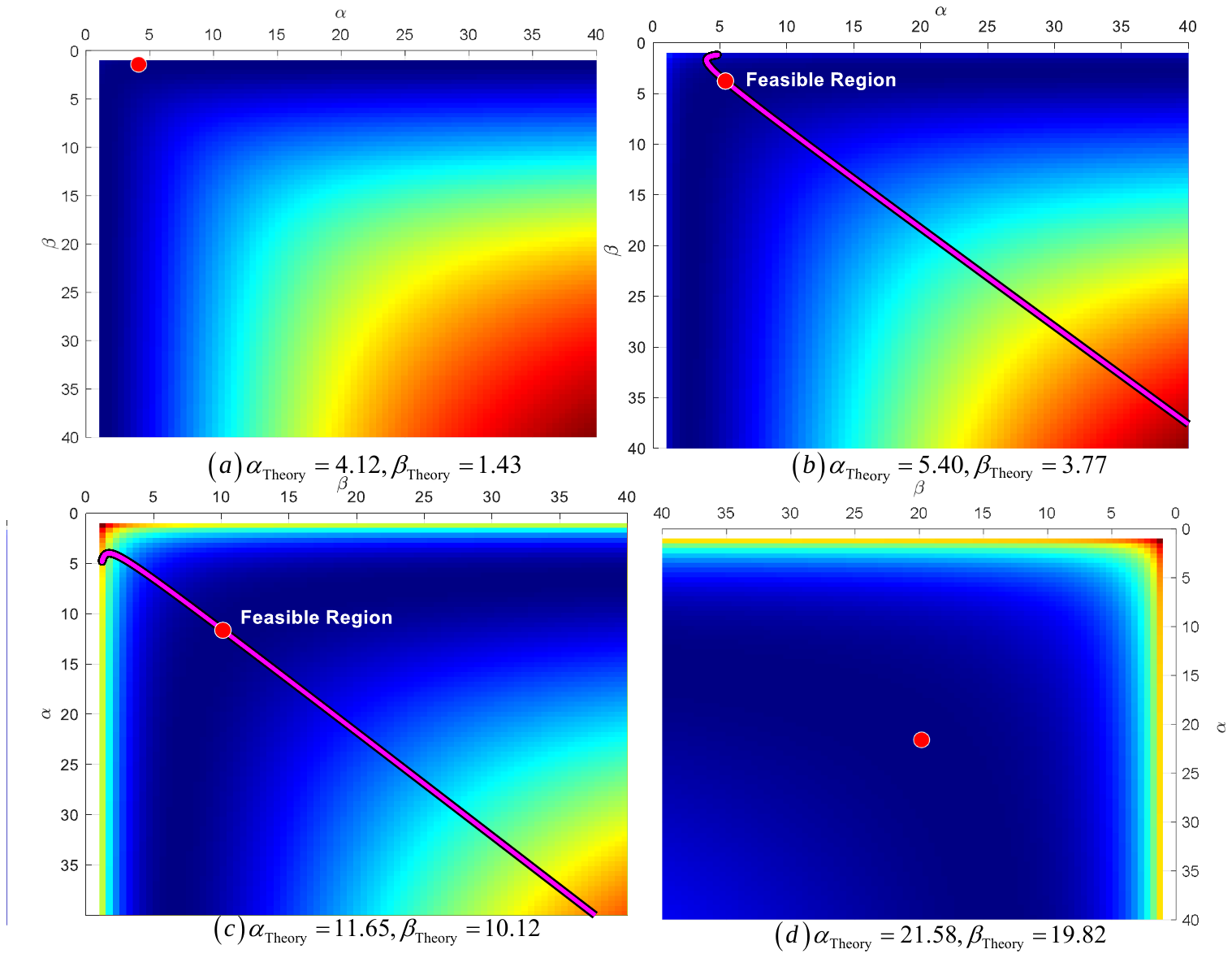}
  \caption{  KL divergence heatmaps for different combinations of shaping parameters $\alpha, \beta$. The corresponding $\sigma_R^2$ values  for (a)-(d) are $3, 0.6, 0.2, 0.1$ respectively, covering turbulence strengths from strong to weak regimes. The red dots mark the theoretical parameter values $(\alpha_{\text{Theory}},\beta_{\text{Theory}})$.}\label{Figure1}
\end{figure}
\indent To illustrate this behavior, Fig.~\ref{Figure1} presents the KL-divergence heatmaps over different combinations of the candidate shaping parameters $(\alpha,\beta)$. Given a theoretical parameter pair $(\alpha_{\mathrm{Theory}},\beta_{\mathrm{Theory}})$, each heatmap value is numerically computed as
\begin{equation}\label{eqAdd2}
D_{\mathrm{KL}}\left(f_{G G}\left(I ; \alpha_{\mathrm{Theory}}, \beta_{\mathrm{Theory}}\right) \| f_{G G}(I ; \alpha, \beta)\right).
\end{equation}
 \indent As shown in Fig.~\ref{Figure1}, the warmer colors (toward red) indicate larger KL divergence values and the red dots in four subplots represent the minimum KL divergence. We can see that  as $\alpha$ or $\beta$ increases, the KL divergence heatmap  continues to exhibit a deep blue similar to the minimum value regions,  thereby confirming our aforementioned inference. In addition, as the theoretical values  $\alpha,$  $\beta$ increase, the expansion of the deep blue regions suggests that a wider range of parameter combinations become potential candidates for the estimation results. In this case, multiple candidate parameter pairs can yield comparable objective values, reducing the identifiability of the true solution. Consequently, the estimator becomes more sensitive to initialization. \\
\indent To mitigate this effect, additional information must be incorporated into the estimation process. According to \cite{Andrews99}, the extended Rytov theory can be regarded as a physically motivated  framework for modeling scintillation in atmospheric turbulence. Within this framework, the scintillation index $\sigma_{SI}^2$  is decomposed into large-scale and small-scale components by modifying the atmospheric spectrum with spatial filters, which is expressed as follows \cite{Andrews2023}
\begin{equation}\label{eq2Add1}
\begin{aligned}
  \sigma_{SI}^2&= \left(1 +\sigma_X^2\right)\left(1 + \sigma_Y^2\right) - 1 = \exp \left(\sigma_{\ln X}^2+\sigma_{\ln Y}^2\right)-1 \\
  &=\exp \left[\frac{0.49 \sigma_R^2}{\left(1+1.11 \sigma_R^{12 / 5}\right)^{7 / 6}}+\frac{0.51 \sigma_R^2}{\left(1+0.69 \sigma_R^{12 / 5}\right)^{5 / 6}}\right] - 1,
\end{aligned}
\end{equation}
for a plane wave with a negligible inner scale, where in (\ref{eq2Add1}),
\begin{equation}\label{eq2Add1_1}
  \sigma_X^2 = \exp \left(\sigma_{\ln X}^2\right)-1,\sigma_Y^2 = \exp \left(\sigma_{\ln Y}^2\right)-1.
\end{equation}
\indent Note that  for the GG distribution, the  scintillation index derived from (\ref{eqAdd1}) is
\begin{equation}\label{eq2Add2}
 \sigma_{SI,GG}^2 = E\left[I^2\right]-1 = \left(1+\frac{1}{\alpha}\right)\left(1+\frac{1}{\beta}\right)-1.
\end{equation}\\
 Thus, by comparing  (\ref{eq2Add1}) and (\ref{eq2Add2}), the parameters $\alpha,\beta$ must satisfy the constraints given by
\begin{equation}\label{eq2}
  \begin{aligned}
& \alpha=g\left(\sigma_R\right)=\left[\exp \left(\frac{0.49 \sigma_R^2}{\left(1+1.11 \sigma_R^{12 / 5}\right)^{7 / 6}}\right)-1\right]^{-1} \\
& \beta=h\left(\sigma_R\right)=\left[\exp \left(\frac{0.51 \sigma_R^2}{\left(1+0.69 \sigma_R^{12 / 5}\right)^{5 / 6}}\right)-1\right]^{-1}.
\end{aligned}
\end{equation}

From (\ref{eq2}), it can be readily verified  $\beta$ decreases monotonically with  $\sigma_R^2$ and this  relationship is subsequently exploited to construct the physics-informed regularization term in \eqref{eq:GG_physical_loss}. Figs.~1(b) and 1(c) present the corresponding physics-informed feasible regions for two representative theoretical parameter settings. The magenta curves represent the feasible parameter sets defined by (\ref{eq2}). In both cases, the feasible curves traverse regions where the KL divergence changes rapidly, thereby providing informative guidance for the optimization and facilitating estimator convergence. \\
\indent 2) \emph{LR distribution}: The LR  model is the product of Lognormal and
Rician distributions, whose PDF
is given by \cite{Andrews2023,Miao25}
\begin{equation}\label{eq3}
  \begin{aligned}
f_{LN}\left(I ; r, \sigma_z^2\right)= & \frac{(1+r) e^{-r}}{\sqrt{2 \pi} \sigma_z} \int_0^{\infty} \frac{d z}{z^2} I_0\left(2\left[\frac{(1+r) r}{z} I\right]^{1 / 2}\right) \\
& \times \exp \left(-\frac{1+r}{z} I-\frac{1}{2 \sigma_z^2}\left(\ln z+\frac{1}{2} \sigma_z^2\right)^2\right),
\end{aligned}
\end{equation}
where in (\ref{eq3}), $r$ is the coherence parameter, $z$ is the Lognormal random variable,  $I_0$ is the zero-order
modified Bessel function of the first kind \cite[Eq.~(8.406.1)]{gradshteyn2007}, and $\sigma_z^2$ is the variance of the logarithm of the irradiance modulation
factor $z$. The  $k$-th moment $\mu_{LN,k}$ of the LR channel  is obtained as
\begin{equation}\label{eq4}
  \mu_{LN,k}=\frac{(k!)^2}{(1+r)^k} \exp \left[\frac{k(k-1) \sigma_z{ }^2}{2}\right] \sum_{m=0}^k \frac{r^m}{(k-m)!(m!)^2} .
\end{equation}
\indent Similarly,  the skewness and excess
kurtosis of the LR distribution are respectively given by
\begin{equation}\label{eq4Add1}
  \gamma_{1, \mathrm{LR}}=\mathbb{E}\left[\left(\frac{I-\mu_{\mathrm{LR}}}{\sigma_{\mathrm{LR}}}\right)^3\right],\gamma_{2, \mathrm{LR}}=\mathbb{E}\left[\left(\frac{I-\mu_{\mathrm{LR}}}{\sigma_{\mathrm{LR}}}\right)^4\right]-3.
\end{equation}
According to Appendix B, the LR
distribution is positively skewed  for all $r>0$ and
$\sigma_z^2\geq 0$. The following proposition shows
that skewness becomes the dominant higher-order contribution
as $r$ increases and $\sigma_z^2$ decreases.
\begin{proposition}
\label{propskewness2}
For sufficiently large $r$ and sufficiently small
$\sigma_z^2$, the skewness and excess kurtosis of the
LR distribution are asymptotically expressed as
\begin{equation}
\gamma_{1,\mathrm{LR}}
=
3\sqrt{V_0}\left(1-2\rho^2\right)
+
\mathcal{O}\!\left(V_0^{3/2}\right),
\label{eq:LR_skewness_asymptotic}
\end{equation}
and
\begin{equation}
\gamma_{2,\mathrm{LR}}
=
8V_0\left(2-9\rho^2+5\rho^3\right)
+
\mathcal{O}\!\left(V_0^2\right),
\label{eq:LR_kurtosis_asymptotic}
\end{equation}
where
\begin{equation}
V_0=\sigma_z^2+\frac{2}{1+r},
0< \rho=\frac{1}{(1+r)V_0} < 1/2.
\label{eq:LR_V0_rho}
\end{equation}
\end{proposition}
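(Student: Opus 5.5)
The plan is to exploit the product structure of the LR model. By construction, $I=zR$, where $z$ is lognormal with $\mathbb{E}[z]=1$ and $\mathbb{E}[z^k]=\exp[k(k-1)\sigma_z^2/2]$, and $R$ is an independent, unit-mean Rician-power (noncentral exponential) variable. Its moments are the sum appearing in (\ref{eq4}), so that (\ref{eq4}) factorizes as $\mathbb{E}[I^k]=\mathbb{E}[z^k]\,\mathbb{E}[R^k]$.

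I will introduce the two small parameters $a=\sigma_z^2$ and $b=1/(1+r)$ and treat them jointly as $\mathcal{O}(V_0)$ with $V_0=a+2b$. In this notation $\rho=b/V_0$, and the constraint $\rho<1/2$ is simply $a>0$. I will write $z=1+x$ and $R=1+y$, so that
\begin{equation*}
I-1=x+y+xy,
\end{equation*}
and then expand central moments of $I$ through the moments of $x$ and $y$, using independence together with $\mathbb{E}[x]=\mathbb{E}[y]=0$.

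The needed inputs are the low-order cumulants of the two factors. For the lognormal factor they are $\kappa_2(z)=e^{a}-1=a+\mathcal{O}(a^2)$, $\kappa_3(z)=3a^2+\mathcal{O}(a^3)$ and $\kappa_4(z)=16a^3+\mathcal{O}(a^4)$. For $R$, the cumulants of a noncentral exponential with scale $b$ and noncentrality $r$ are $\kappa_n(R)=(n-1)!\,b^n(1+nr)$. Since $b^n(1+nr)=nb^{n-1}+\mathcal{O}(b^n)$, this gives $\kappa_2(R)=2b+\mathcal{O}(b^2)$, $\kappa_3(R)=6b^2+\mathcal{O}(b^3)$ and $\kappa_4(R)=24b^3+\mathcal{O}(b^4)$. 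Every cumulant of order $n$ of either factor is therefore $\mathcal{O}(V_0^{n-1})$, which fixes the bookkeeping of orders.

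For the variance, $\mathbb{E}[(I-1)^2]=\mathbb{E}[x^2]+\mathbb{E}[y^2]+\mathbb{E}[x^2]\mathbb{E}[y^2]=V_0+\mathcal{O}(V_0^2)$.

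For the third central moment, I will keep all terms of total order $V_0^2$. These are $\mathbb{E}[x^3]$, $\mathbb{E}[y^3]$, and the cross term $6\,\mathbb{E}[x^2]\,\mathbb{E}[y^2]$ coming from $3(x+y)^2xy$, so that
\begin{equation*}
\mu_3=3a^2+6b^2+12ab+\mathcal{O}(V_0^3)=3\left[(a+2b)^2-2b^2\right]+\mathcal{O}(V_0^3).
\end{equation*}
Dividing by $V_0^{3/2}\bigl(1+\mathcal{O}(V_0)\bigr)$ yields $\gamma_{1,\mathrm{LR}}=3\sqrt{V_0}\,(1-2\rho^2)+\mathcal{O}(V_0^{3/2})$, which is (\ref{eq:LR_skewness_asymptotic}).

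For the kurtosis, I will work with the fourth cumulant $\kappa_4(I)=\mu_4-3\mu_2^2$, so that the $3V_0^2$ Gaussian part cancels exactly and only $\mathcal{O}(V_0^3)$ terms remain. These terms are $\kappa_4(z)$, $\kappa_4(R)$, and the cross terms of order $V_0^3$ in the expansion of $(x+y+xy)^4$. The cross terms are of the type $\mathbb{E}[x^3]\mathbb{E}[y^2]$ and $\mathbb{E}[x^2]\mathbb{E}[y^3]$ from products like $4(x+y)^3xy$, and $\mathbb{E}[x^2]^2\mathbb{E}[y^2]$ and $\mathbb{E}[x^2]\mathbb{E}[y^2]^2$ from $6(x+y)^2x^2y^2$. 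After collecting them I will rewrite $a=V_0(1-2\rho)$ and $b=\rho V_0$, divide by $\mu_2^2=V_0^2\bigl(1+\mathcal{O}(V_0)\bigr)$, and match the resulting cubic in $\rho$ with $8(2-9\rho^2+5\rho^3)$. A quick check is $\rho=0$ (pure lognormal), where the leading kurtosis $16a$ agrees.

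The main obstacle is this fourth-order bookkeeping. It requires enumerating all order-$V_0^3$ cross terms, including those from the $\mathcal{O}(V_0^2)$ corrections of $\mu_2$ that enter through $3\mu_2^2$, and verifying that the cubic in $\rho$ comes out exactly as stated. To keep it under control I would first do the computation with cumulants of $\log I=\log z+\log R$ as a cross-check. As a final step, the remainder claims follow because every neglected term carries at least one extra factor of $a$ or $b$, i.e. of $V_0$, uniformly for $\rho\in(0,1/2)$.
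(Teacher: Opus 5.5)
Your proposal is correct. The fourth-order step you flag as the main obstacle does close. Collect exactly the terms you list, writing $\mathbb{E}[x^4]=3\kappa_2(z)^2+\kappa_4(z)$ exactly, and include the $-6\kappa_2(z)\kappa_2(R)[\kappa_2(z)+\kappa_2(R)]$ contributed by $3\mu_2^2$. Then the within-factor corrections cancel identically and one gets
\begin{align*}
\kappa_4(I)&=\kappa_4(z)+\kappa_4(R)+12\left[\kappa_3(z)\kappa_2(R)+\kappa_2(z)\kappa_3(R)\right]\\
&\quad+12\,\kappa_2(z)\kappa_2(R)\left[\kappa_2(z)+\kappa_2(R)\right]+\mathcal{O}(V_0^4)\\
&=16a^3+96a^2b+120ab^2+24b^3+\mathcal{O}(V_0^4).
\end{align*}
This is exactly $8(2V_0^3-9V_0b^2+5b^3)$. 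It also gives $6b$ at the pure-Rician end $\rho=1/2$, a second check besides your $\rho=0$ one.

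Your route differs from the paper's mainly in how the expansion is organized. The paper never centers the factors. It expands the raw moment function $M(q)=M_{\mathrm{L}}(q)M_{\mathrm{R}}(q)$ to third order in $\sigma_z^2$ and $t=1/(1+r)$, with coefficients that are polynomials in the moment order $q$, and regroups in $V_0$ and $t$. It then extracts $\mu_2$, $\mu_3$ and $\kappa_4$ from $M(1),\dots,M(4)$, where large lower-order terms must cancel.

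Your centered expansion $I-1=x+y+xy$ uses exact factor cumulants, notably the closed form $\kappa_n(R)=(n-1)!\,b^n(1+nr)$. This builds those cancellations in: the Gaussian part of $\mu_4$ drops out by construction. The rule $\kappa_n=\mathcal{O}(V_0^{n-1})$ also tells you in advance which products survive at each order, so your bookkeeping is shorter and easier to audit.

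What the paper's formulation buys is reuse. The $q$-polynomial form of $M(q)$ is exactly what Appendix~D needs, both for $K_{\log}(q)=\ln M(q)$ and for the Box--Cox moments $M(k\lambda)$ at non-integer orders. Your integer-power expansion in $x,y$ would have to be redone for $I^{\lambda}$.
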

\begin{IEEEproof}
The proof is provided in Appendix~C.
\end{IEEEproof}
\indent Moreover, using  (\ref{eq4}), the corresponding scintillation index is obtained as
\begin{equation}\label{eq5}
\begin{aligned}
  \sigma_{SI,LN}^2 &=\exp \left(\sigma_z^2\right) \frac{r^2+4 r+2}{(1+r)^2} - 1 \\
  & = \left(1 + \exp \left(\sigma_z^2\right)-1\right) \left(1+\frac{2r+1}{(1+r)^2}\right) - 1
\end{aligned}
\end{equation}
 Thus, by comparing  (\ref{eq2Add1}) and (\ref{eq5}), the relationships between $r,\sigma_z^2$ and $\sigma_R^2$ are obtained as
 \begin{equation}\label{eq6}
  \begin{aligned}
  \sigma_z^2&=\frac{0.49 \sigma_R^2}{\left(1+1.11 \sigma_R^{12 / 5}\right)^{7 / 6}},
  r=\frac{\left(1-\sigma_Y^2\right)+\sqrt{1-\sigma_Y^2}}{\sigma_Y^2}\\
  \sigma_Y^2&=\exp \left(\frac{0.51 \sigma_R^2}{\left(1+0.69 \sigma_R^{12 / 5}\right)^{5 / 6}}\right)-1
  \end{aligned}
 \end{equation}
 \indent To the best of our knowledge, the relationships in (\ref{eq6}) have not been  derived previously. It can be easily shown that $r$ decreases monotonically with $\sigma_R^2$, whereas $\sigma_z^2$ first increases, reaches a maximum, and then decreases as $\sigma_R^2$ increases. Note that these qualitative trends are consistent with those illustrated in \cite{Churnside1987LognormalRician}.\\
 \indent However, unlike the GG distribution, the LR distribution generally cannot be approximated by using  a single Lognormal or Rician distribution. This non-degeneracy alleviates the statistical ambiguity between its distribution profile and underlying parameters. Fig.~\ref{Figure3} shows the KL divergence heatmaps
with physics-informed constrained feasible regions under two turbulence conditions $\sigma_R^2 = 0.6,3$, and  each heatmap value  is numerically computed as
\begin{equation}
D_{\mathrm{KL}}\!\left(
f_{LR}(I;r_{\mathrm{Theory}},\sigma_{z,\mathrm{Theory}}^2)
\,\middle\|\,
f_{LR}(I;r,\sigma_z^2)
\right).
\end{equation}
 It can be observed from Fig.~\ref{Figure3} that the influence of the physical constraint depends strongly on the turbulence environment. When $\sigma_R^2 = 0.6$ the physically feasible curve mainly lies in a relatively flat low-divergence region and does not clearly pass through the region where the KL divergence changes rapidly. In this case, the constraint provides limited guidance for improving convergence. In contrast, when $\sigma_R^2 = 3$, the feasible curve intersects a region with stronger KL-divergence variation, indicating that the physical constraint offers more useful guidance.
\begin{figure}[h]
  \centering
  \includegraphics[width=3.5in]{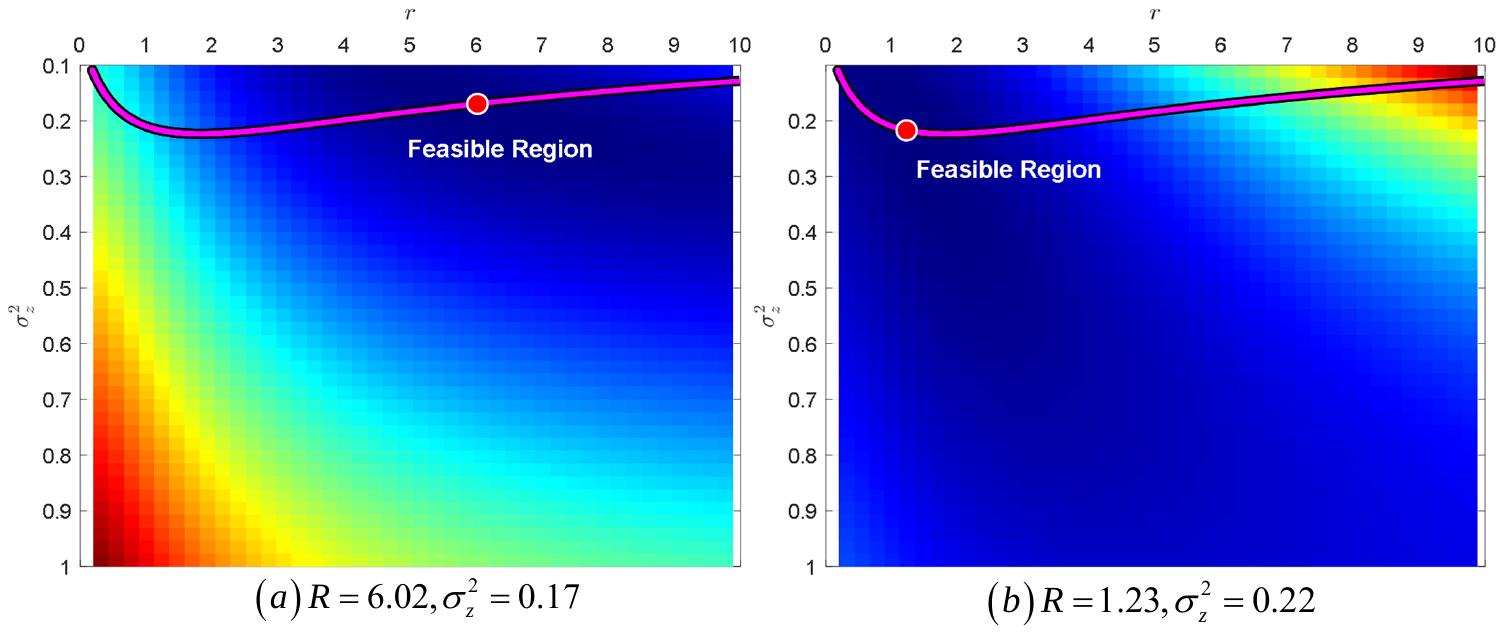}
  \caption{KL Divergence heatmaps with physics-informed constrained feasible
regions. The red dots mark the theoretical parameter values $(r_{\text{Theory}},\sigma^2_{z,\text{Theory}})$.}\label{Figure3}
\end{figure}
\subsection{Gaussianization and KS Goodness-of-Fit Test Results}
\indent According to (\ref{eq1}) and (\ref{eq3}), the probability density functions of the original GG and LR distributions involve special functions and integral operations, making direct likelihood evaluation computationally demanding. To avoid this difficulty, Gaussianizing transformations, including the QT and the  Box–Cox transformation, are applied to convert the irradiance samples into approximate Gaussian samples. In what follows, $Y$ denotes the observed  variable, and we  consider the general noisy observation model, where  $Y$ is given by
\begin{equation}\label{eq7Add1}
  Y = I + N_g,
\end{equation}
 where $I$ represents the turbulence-induced irradiance and $N_g$ represents the additive Gaussian noise with zero mean and  variance $\sigma_g^2$, and they are assumed to be independent of each other. In the following, the signal-to-noise ratio (SNR) is defined as
 \begin{equation}
   \mathrm{SNR}=\frac{(\mathbb{E}[I])^2}{\sigma_g^2} = \frac{1}{\sigma_g^2},
 \end{equation}
 if not otherwise specified. It is easy to show that the third central
moment of $Y=I+N_g$ satisfies
\begin{equation}
\mathbb{E}\left[(Y-\mathbb{E}[Y])^3\right]=\mathbb{E}\left[(I-\mathbb{E}[I])^3\right].
\end{equation}
Moreover, $
\operatorname{Var}(Y)
=
\operatorname{Var}(I)+\sigma_g^2.
$ Therefore,  $Y$ is positively skewed under noisy conditions according to
\begin{equation}
\gamma_Y
=
\gamma_I
\left(
\frac{\operatorname{Var}(I)}
{\operatorname{Var}(I)+\sigma_g^2}
\right)^{3/2} > 0.
\end{equation}

\indent 1) \emph{QT Method:} The QT method is based on the probability integral transform that maps non-Gaussian irradiance samples into approximate Gaussian samples \cite{casella2002statistical}. Notably, this method does not require the input data to be strictly positive, so it remains applicable to noisy observations. For an observed sample $Y$  with cumulative distribution function (CDF) $F_Y(y)$, the transformed variable is defined as
\begin{equation}\label{eq8}
Z_{QT}=\Phi^{-1}(F_Y(Y)),
\end{equation}
where $\Phi^{-1}(\cdot)$ denotes the inverse cumulative distribution function of the standard Gaussian distribution, i.e., $\mathcal{N}(0,1)$, and can be accurately computed using standard numerical routines, such as MATLAB's \emph{norminv} function. In practice, $F_Y(y)$ is replaced by the empirical distribution function estimated from the samples. Hence, given $N$ data samples $\{Y_i\}_{i=1}^{N}$ sorted in ascending order, the empirical distribution  of the $i$-th sorted sample is
\begin{equation}\label{eq9}
p_i=\frac{i-0.5}{N}, \quad i=1,2,\ldots,N,
\end{equation}
according to the plotting rule discussed in \cite{blom1958statistical}. The transformed samples $z_{QT} = \{z_{QT,i}\}_{i=1}^N$ are then obtained as
\begin{equation}\label{eq10}
z_{QT,i}=\Phi^{-1}(p_i), \quad i=1,2,\ldots,N.
\end{equation}
\indent 2) \emph{Two-parameter Box–Cox:} According to \cite{box1964analysis},
the Box–Cox transformation is a parametric Gaussianizing method that applies a power parameter to reduce skewness and stabilize the variance of the data, thereby rendering the transformed samples approximately Gaussian. For the noisy data samples, we adopt the two-parameter Box–Cox
transformation. The transformed variable $Z_{\mathrm{Box-Cox}}$ is defined as
\begin{equation}
Z_{\mathrm{Box-Cox}}
=
\begin{cases}
\dfrac{(Y+\alpha)^\lambda-1}{\lambda},
& \lambda\neq 0,\\[8pt]
\ln(Y+\alpha),
& \lambda=0,
\end{cases}
\end{equation}
Accordingly, for the observations $\{y_i\}_{i=1}^{N}$, the
corresponding transformed samples
${z}_{\mathrm{Box-Cox}}
=
\{z_{\mathrm{Box-Cox},i}\}_{i=1}^{N}$
are given by
\begin{equation}
z_{\mathrm{Box-Cox},i}
=
\begin{cases}
\dfrac{(y_i+\alpha)^\lambda-1}{\lambda},
& \lambda\neq 0,\\[8pt]
\ln(y_i+\alpha),
& \lambda=0,
\end{cases}
\qquad i=1,2,\ldots,N.
\end{equation}
 The shift $\alpha$ guarantees a positive argument, $y_i+\alpha>0$ for
all $i$, while $\lambda$ controls the strength of the power
transformation. Specifically, when $\alpha = 0$, the two-parameter Box–Cox reduces to  the one-parameter Box–Cox
transformation. We note  this case corresponds to the noiseless situation, i.e., $Y = I$. In addition, when $\lambda=1$,
the transformed variable becomes $Y+\alpha-1$, which differs
from $Y$ only by a translation and therefore has the same
skewness and excess kurtosis.  \\
\indent Based on Propositions~\ref{propskewness1} and
\ref{propskewness2}, skewness is the dominant higher-order
contribution under the considered parameter regimes. Thus, if
the Box–Cox transformation effectively improves the
Gaussianity of the channel distribution, its primary effect
should be to suppress the skewness component. Accordingly, the
Box–Cox parameter $\lambda$ can be theoretically determined by
eliminating the leading-order skewness, as stated in the
following proposition.
\begin{proposition}
\label{prop:theoretical_lambda}
Under the parameter regimes considered in
Propositions~\ref{propskewness1} and \ref{propskewness2}, the
skewnesses of the logarithmically transformed GG
and LR distributions satisfy
\begin{equation}
\gamma_{1,\log,\mathrm{GG}}<0,
\quad
\gamma_{1,\log,\mathrm{LR}}<0.
\end{equation}
Moreover, the unique values of $\lambda$ that
eliminate the  skewness are
\begin{equation}
\lambda_{\mathrm{GG}}^{\star}
=
\frac{\tau^2+1}{3(\tau+1)^2},\quad \frac{1}{6} < \lambda_{\mathrm{GG}}^{\star}<\frac{1}{3},
\label{eq:GG_theoretical_lambda}
\end{equation}
for the GG distribution, and
\begin{equation}
\lambda_{\mathrm{LR}}^{\star}
=
\frac{2}
{\left[2+(1+r)\sigma_z^2\right]^2},\quad 0 < \lambda_{\mathrm{LR}}^{\star}<\frac{1}{2},
\label{eq:LR_theoretical_lambda}
\end{equation}
for the LR distribution.
\end{proposition}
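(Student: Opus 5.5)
The approach is a second-order delta-method (small-fluctuation) expansion of the Box–Cox map around the mean $\mathbb{E}[I]=1$. I would identify the transformed skewness to leading order and set it to zero. Write $I=1+\varepsilon$ with $\mathbb{E}[\varepsilon]=0$, $V=\mathrm{Var}(\varepsilon)$ and $\kappa_3=\mathbb{E}[\varepsilon^3]$. In both regimes of Propositions~\ref{propskewness1} and \ref{propskewness2}, $\varepsilon$ is small: $V=\mathcal{O}(\beta^{-1})$ or $\mathcal{O}(V_0)$, and $\kappa_3=\mathcal{O}(V^2)$. Taylor expansion gives
\begin{equation*}
\frac{(1+\varepsilon)^\lambda-1}{\lambda}=\varepsilon+\frac{\lambda-1}{2}\varepsilon^2+\mathcal{O}(\varepsilon^3),
\end{equation*}
and the $\lambda=0$ (logarithm) case is the continuous limit of the same expansion. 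For $Z=\varepsilon+c\,\varepsilon^2$ with $c=(\lambda-1)/2$, the third cumulant is $\kappa_3+6cV^2$ plus terms of order $V^3$; this uses $\mathbb{E}[\varepsilon^4]=3V^2+\mathcal{O}(V^3)$, which follows from the kurtosis asymptotics in the two propositions. The variance of $Z$ is $V(1+\mathcal{O}(V))$. Hence
\begin{equation*}
\gamma_{1}(Z)=\frac{\kappa_3+3(\lambda-1)V^2}{V^{3/2}}+\text{higher order}.
\end{equation*}
This leading term is affine in $\lambda$ with positive slope $3V^{1/2}$, so it has a unique zero,
\begin{equation*}
\lambda^\star=1-\frac{\kappa_3}{3V^2}.
\end{equation*}
The log-transformed skewness ($\lambda=0$) has the sign of $\kappa_3-3V^2$, and this is negative exactly when $\lambda^\star>0$.

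For the GG model, I would use the representation $I=XY$ with independent unit-mean Gamma variables whose variances are $1/\alpha$ and $1/\beta$. Alternatively, one can read off $V$ and $\kappa_3$ directly from \eqref{eqAdd1}. To leading order this gives $V=1/\alpha+1/\beta$ and $\kappa_3=2/\alpha^2+2/\beta^2+6/(\alpha\beta)$, consistent with the skewness in Proposition~\ref{propskewness1}. Substituting $\alpha=\tau\beta$ removes the $\beta$ dependence:
\begin{equation*}
\lambda^\star_{\mathrm{GG}}=1-\frac{2\tau^2+6\tau+2}{3(\tau+1)^2}=\frac{\tau^2+1}{3(\tau+1)^2}.
\end{equation*}
For the bounds, $\lambda^\star_{\mathrm{GG}}$ is increasing in $\tau$ for $\tau>1$, since its derivative is proportional to $\tau-1$. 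It equals $1/6$ at $\tau=1$ and tends to $1/3$ as $\tau\to\infty$. The lower inequality is therefore strict only for $\tau>1$; at the symmetric point $\tau=1$ it holds with equality, and I would note this in the proof. Because $\lambda^\star_{\mathrm{GG}}>0$, the log-skewness is negative: $\kappa_3-3V^2=-(\tau^2+1)/(\tau^2\beta^2)$ to leading order.

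For the LR model, Proposition~\ref{propskewness2} (Appendix~C) gives $V=V_0$ and $\kappa_3=3V_0^2(1-2\rho^2)$ to leading order. Therefore $\lambda^\star_{\mathrm{LR}}=2\rho^2$. Substituting $\rho=1/((1+r)V_0)$ and $(1+r)V_0=2+(1+r)\sigma_z^2$ yields \eqref{eq:LR_theoretical_lambda}. The range $0<\rho<1/2$ gives $0<\lambda^\star_{\mathrm{LR}}<1/2$. The log-skewness is $-6\rho^2V_0^{1/2}<0$.

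The main obstacle is justifying the truncation rigorously. I must show that the cubic Taylor remainder and the fourth- and higher-order moments of $\varepsilon$ contribute only $\mathcal{O}(V^{3/2})$ to the skewness. Since $I$ is unbounded, this cannot be a pointwise argument. I would first bound the moments: all moments of $\varepsilon$ scale as $\mathbb{E}[|\varepsilon|^k]=\mathcal{O}(V^{k/2})$, which follows from \eqref{eqAdd1} and \eqref{eq4}. I would then expand $\mathbb{E}[(I^\lambda-\mathbb{E}I^\lambda)^k]$ exactly using the moment formulas at non-integer order $\lambda$, where $\Gamma(\alpha+\lambda)/\Gamma(\alpha)$ admits a Stirling expansion. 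This avoids the local Taylor argument altogether.
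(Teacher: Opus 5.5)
Your proof is correct and reaches the paper's formulas by a different route. Your leading-order skewness $(\kappa_3+3(\lambda-1)V^2)/V^{3/2}=3\sqrt{V}\,(\lambda-\lambda^\star)$ matches the paper's expressions for $\gamma_{1,\mathrm{GG}}(\lambda)$ and $\gamma_{1,\mathrm{LR}}(\lambda)$ in Appendix~D once the leading-order $V$ and $\kappa_3$ from Propositions~1--2 are inserted.

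The paper handles the two claims separately. The sign of the log-skewness comes from the log-cumulant generating function $K_{\log}(q)=\ln M(q)$. For GG this gives the exact values $\psi_1(\alpha)+\psi_1(\beta)>0$ and $\psi_2(\alpha)+\psi_2(\beta)<0$, so $\gamma_{1,\log,\mathrm{GG}}<0$ for all $\alpha,\beta$, not just asymptotically; for LR it uses the $t$-expansion. Existence of a zero then follows from the intermediate value theorem between $\lambda=0$ and $\lambda=1$. Finally, the skewness of $I^\lambda$ is computed from $\mathbb{E}[(I^\lambda)^k]=M(k\lambda)$ using the Stirling and Rician expansions.

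Your delta-method expansion of the transform around $\mathbb{E}[I]=1$ instead reduces everything to $V$, $\kappa_3$ and $\kappa_4=\mathcal{O}(V^3)$ of the untransformed irradiance, which Propositions~1--2 already supply. This buys several things:
\begin{itemize}
\item it gives the model-independent rule $\lambda^\star=1-\kappa_3/(3V^2)$;
\item it treats the logarithm as the $\lambda=0$ point of the same affine function, so the intermediate-value step is unnecessary and uniqueness of the leading-order zero is immediate;
\item it makes the limits in the Remark (Wilson--Hilferty $1/3$, square root, logarithm) transparent.
\end{itemize}
What the Taylor route does not control by itself is the remainder on an unbounded support, including near $I=0$ when $\lambda=0$. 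Your proposed fix, expanding $M(k\lambda)$ directly, is exactly the paper's mechanism, so the two arguments meet there.

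Your remark about $\tau=1$ is correct and exposes an imprecision in the statement. The paper claims $1/6<\lambda^\star_{\mathrm{GG}}$ for $\tau\ge1$, yet its own Remark gives $\lambda^\star_{\mathrm{GG}}=1/6$ at $\alpha=\beta$. Likewise, $\lambda^\star_{\mathrm{LR}}<1/2$ requires $\sigma_z^2>0$, even though Appendix~C allows $\rho\le1/2$.
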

\begin{IEEEproof}
The proof is provided in Appendix~D.
\end{IEEEproof}
\begin{remark}
We note that the limiting values in
\eqref{eq:GG_theoretical_lambda} and
\eqref{eq:LR_theoretical_lambda} can be interpreted in terms
of classical Gaussianizing transformations and the dominant
component distributions. The Wilson--Hilferty result shows
that the cube-root transformation provides a Gaussian
approximation for a Gamma variable \cite{Noguchi2026}. As
$r\to\infty$, the GG distribution approaches a
Gamma distribution, and hence
$\lambda_{\mathrm{GG}}^\star\to1/3$. When
$\alpha=\beta$, the two Gamma components contribute
symmetrically, yielding
$\lambda_{\mathrm{GG}}^\star=1/6$.

For the LR distribution, the limiting value is
determined by the dominant component. Let
$\xi=(1+r)\sigma_z^2$, as $\xi\to0$, the Rician component
dominates and $\lambda_{\mathrm{LR}}^\star\to1/2$,
corresponding to the square-root transformation. As
$\xi\to\infty$, the Lognormal component dominates and
$\lambda_{\mathrm{LR}}^\star\to0$, corresponding to the
logarithmic transformation, which maps a Lognormal variable
exactly to a Gaussian variable.
\end{remark}

\begin{figure}[h]
  \centering
  \includegraphics[width=3.5in]{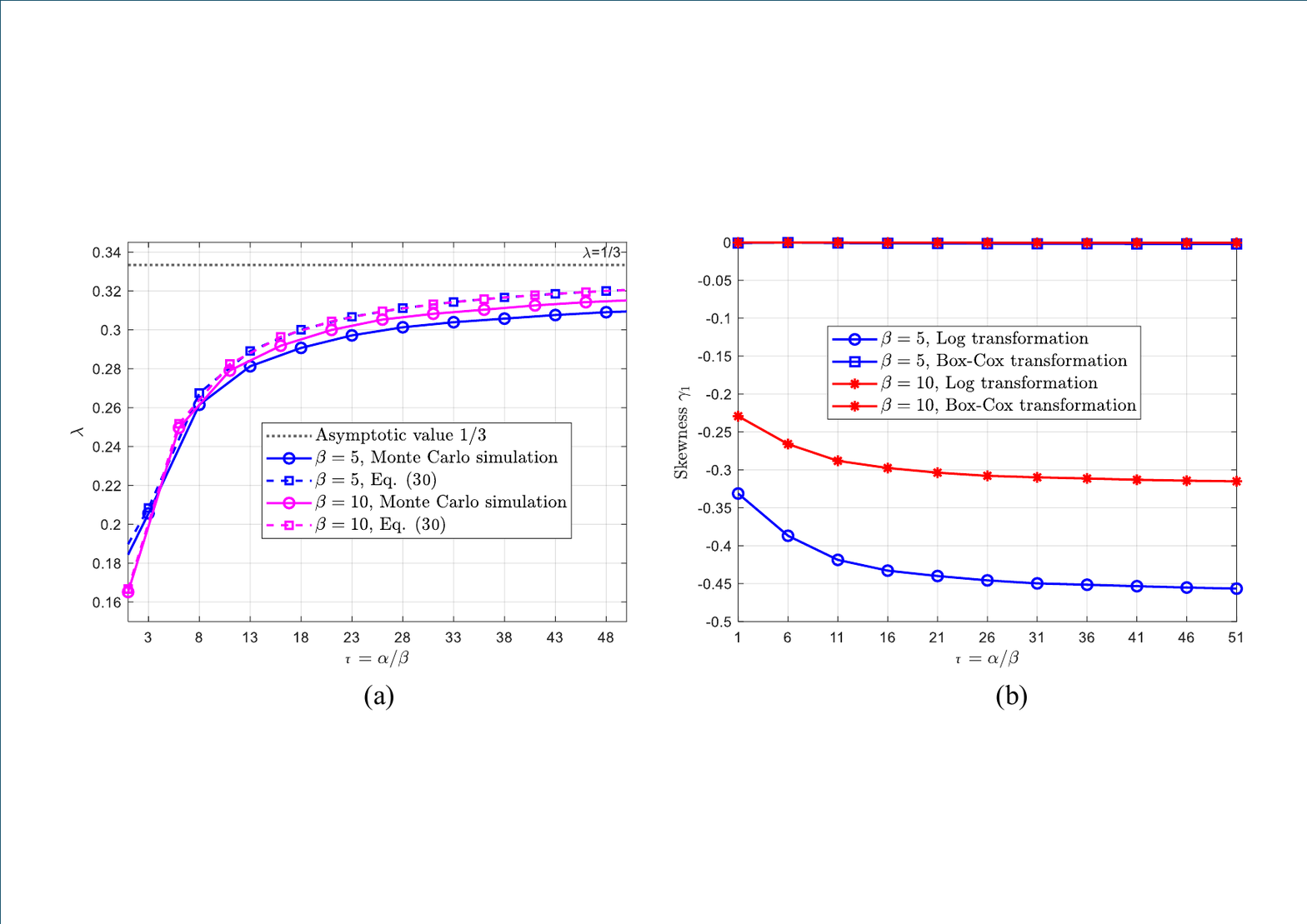}
  \caption{Theoretical and Monte Carlo values of $\lambda$ and
the corresponding skewness for the GG distribution:
(a) theoretical and Monte Carlo values of $\lambda$;
(b) skewness after the logarithmic and Box–Cox transformations.}\label{Figure6}
\end{figure}

\begin{figure}[h]
  \centering
  \includegraphics[width=3.5in]{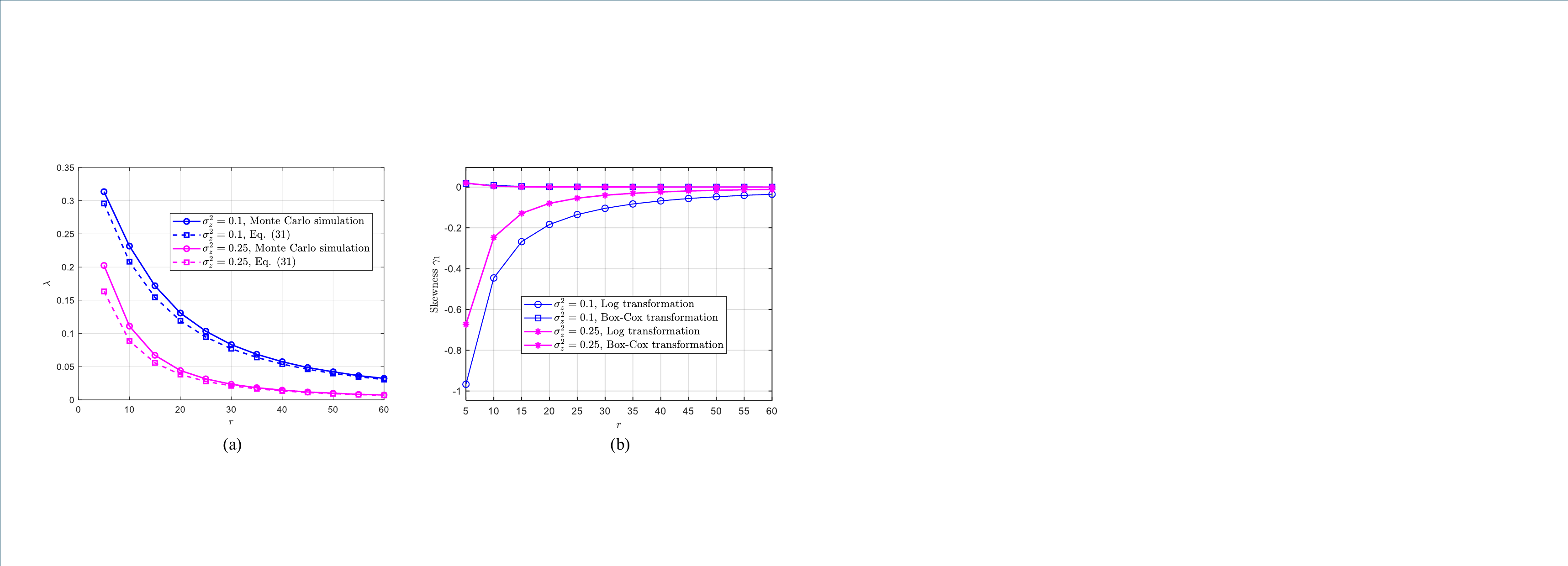}
  \caption{Theoretical and Monte Carlo values of $\lambda$ and
the corresponding skewness for the LR distribution:
(a) theoretical and Monte Carlo values of $\lambda$;
(b) skewness after the logarithmic and Box–Cox transformations.}
\label{fig:LR_lambda_validation}\label{Figure7}
\end{figure}
In Figs.~\ref{Figure6} and \ref{Figure7}, we present numerical
results to validate Proposition~\ref{prop:theoretical_lambda}.
For each parameter setting, the Monte Carlo estimates of
$\lambda$ and skewness are computed using $10^6$ samples and
averaged over 100 independent trials.

Figs.~\ref{Figure6}(a) and \ref{Figure7}(a) demonstrate close
agreement between the theoretical values of $\lambda$ in
\eqref{eq:GG_theoretical_lambda} and
\eqref{eq:LR_theoretical_lambda} and their Monte Carlo
counterparts. In particular, the GG values converge
to the asymptotic limit $1/3$ as $\tau$ increases,
whereas the theoretical and Monte Carlo values for the
LR distribution become increasingly close as
$r$ increases. In addition,  Figs.~\ref{Figure6}(b) and \ref{Figure7}(b)
show that the skewness of both distributions is reduced to
nearly zero through the Box–Cox transformations. It is also worth noting that, although
Proposition~\ref{prop:theoretical_lambda} is derived for
sufficiently large GG shape parameters and for large
$r$ and small $\sigma_z^2$ in the LR model, the
numerical results still show reasonable accuracy for moderate
parameter settings, such as $\alpha=\beta=5$ and
$r=5, \sigma_z^2=0.1$.\\
\indent 3) \emph{KS test results}: Before applying the QT and the Box–Cox transformation to develop  parameter estimation methods, we first use a two-sample KS test to examine whether, for each of the two considered models, the observations transformed using the QT and Box–Cox transformations can be adequately approximated by Gaussian distributions. \\
\indent According to \cite{Monahan2011}, the test statistic is defined as
\begin{equation}
D_{n,m}
=
\sup_x
\left|
F_{1,N}(x)-F_{2,M}(x)
\right|.
\end{equation}
where $N$ and $M$ denote the numbers of transformed observations
and Gaussian reference samples, respectively, and
$\sup$ is the supremum function.
$F_{1,N}(x)$ is the empirical CDF
of the transformed observations, $\{z_i\}_{i=1}^{N}$, whereas $F_{2,M}(x)$ is that
of the Gaussian reference samples. Note that the reference samples are
independently generated from a Gaussian distribution
$\mathcal{N}(\widehat{\mu}_z,\widehat{\sigma}_z^2)$, where
$\widehat{\mu}_z$ and $\widehat{\sigma}_z^2$ are obtained as
\begin{equation}
  \widehat{\mu}_z=\frac{1}{N} \sum_{i=1}^N z_i, \quad \widehat{\sigma}_z^2=\frac{1}{N-1} \sum_{i=1}^N\left(z_i-\widehat{\mu}_z\right)^2,
\end{equation}
\indent The null hypothesis  $H_0$ asserts that the transformed observations and the
Gaussian reference samples are drawn from the same underlying distribution.
For sufficiently large sample sizes, $H_0$ is rejected at the significance
level $\alpha=0.05$ if
\begin{equation}\label{eq26}
\sqrt{\frac{NM}{N+M}}D_{N,M}>1.358,
\end{equation}
\begin{figure}[h]
  \centering
  \includegraphics[width=3.5in]{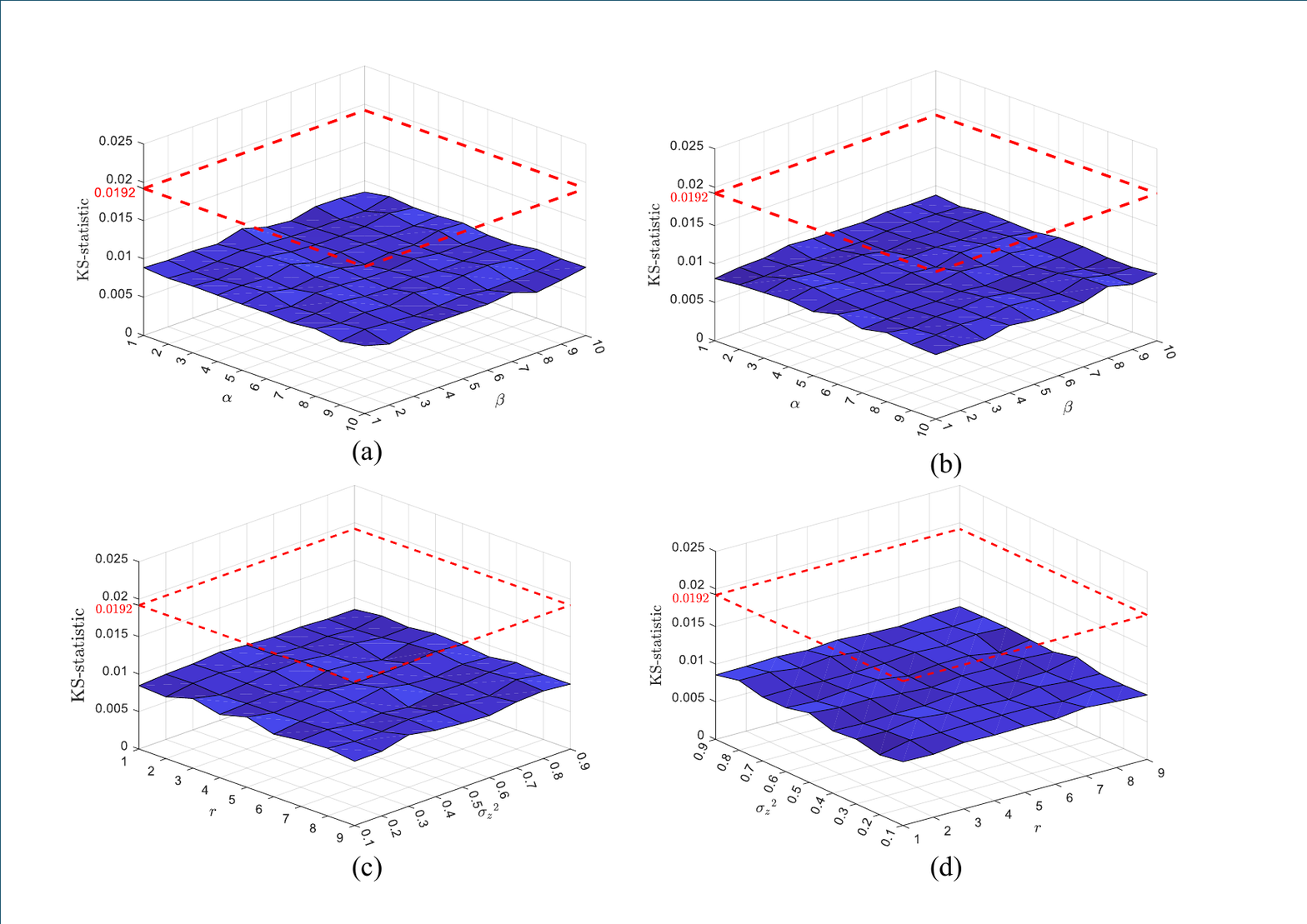}
  \caption{KS-test results for the transformed samples based on QT over different channel parameters, $N = M = 10^4$:
(a) GG channel without additive noise;
(b) GG channel at an SNR of 10dB;
(c) LR channel without additive noise; and
(d) LR channel at an SNR of 10dB.}\label{Figure4}
\end{figure}
\begin{figure}[h]
  \centering
  \includegraphics[width=3.5in]{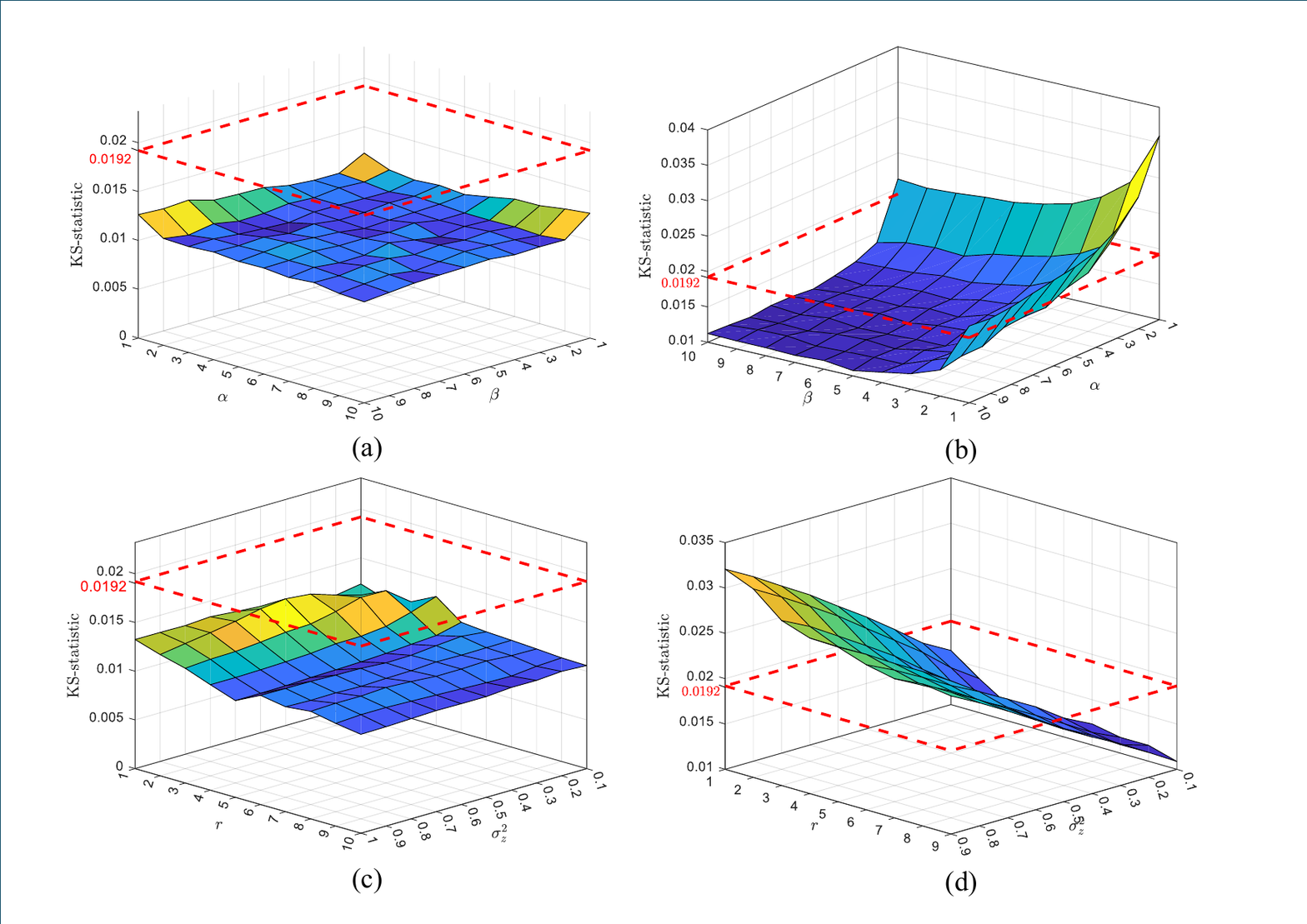}
  \caption{KS-test results for the transformed samples based on Box–Cox over different channel parameters, $N = M = 10^4$:
(a) GG channel without additive noise;
(b) GG channel at an SNR of 10dB;
(c) LR channel without additive noise; and
(d) LR channel at an SNR of 10dB.}\label{Figure5}
\end{figure}

\indent Figs.~\ref{Figure4} and~\ref{Figure5} present the KS-test results for the  transformed samples based on QT and Box–Cox, respectively. The KS statistics are obtained by averaging over 100 independent Monte Carlo realizations. In each realization, the sample sizes of the transformed samples and the Gaussian reference samples are set to $N = M = 10^4$. According to (\ref{eq26}), the corresponding KS critical value is 0.0192, which is also shown in both figures as a benchmark for comparison. \\
\indent Fig.~\ref{Figure4} presents the KS-test results obtained with the QT. Figs.~\ref{Figure4}(a) and~\ref{Figure4}(b) show the results for the GG channel in the
noiseless case and at an SNR of 10~dB, respectively, and Figs.~\ref{Figure4}(c)
and~\ref{Figure4}(d) show those for the LR channel. In all four cases,
the averaged KS statistics remain below the critical value of 0.0192 over
the entire range of channel parameter combinations considered. Moreover,
the KS statistic surfaces vary only slightly with the channel parameters
and are nearly unaffected by the additive noise. These results indicate
that the QT consistently maps the observations of both channel models to
approximately Gaussian samples and is robust to variations in both the
channel parameters and the noise level. \\
\indent Fig.~\ref{Figure5} presents the KS-test results for the transformed samples based on Box–Cox. Although all KS statistics remain below the critical value of 0.0192 under the noiseless condition, they generally increase as the turbulence becomes stronger, indicating a relatively less accurate Gaussian approximation. For the GG channel, this trend is observed as both $\alpha$ and $\beta$ decrease, whereas for the LR channel, it occurs as $r$ decreases and $\sigma_z^2$ increases. At an SNR of 10 dB, the degradation becomes more pronounced, and the KS statistics exceed the critical value for some parameter combinations in the strong-turbulence regions. These results indicate that the Box–Cox transform provides satisfactory Gaussianization under weak-to-moderate turbulence but becomes less robust under strong turbulence, particularly in the presence of additive noise.
\section{The Proposed  Estimators Based on QT and Box-Cox}
Based on the aforementioned discussion in Section II, the physics-informed regularization term consistently enhances the identifiability of the GG parameters by confining the search to a physically feasible region. Therefore, \eqref{eq2} will be incorporated into the  parameter estimation for the GG channels. In contrast, for the LR channels, the effect of the physical constraint  depends on the   turbulence conditions. Accordingly, the LR parameters are estimated without imposing this constraint. \\
\indent Throughout this section, ${Y}_s=\left\{y_{s, i}\right\}_{i=1}^{N_s}$ denote the samples acquired from the considered channel, whereas ${Y}_g=\left\{y_{g, i}\right\}_{i=1}^{N_g}$ denote the samples generated locally under a candidate channel-parameter vector. For the GG channel, $\boldsymbol{\theta}=(\alpha, \beta)$, whereas for the LR channel, $\boldsymbol{\theta} = \left(r, \sigma_z^2\right)$.  Furthermore, $\mathcal{T}_s(\cdot)$ and $\mathcal{T}_g(\cdot)$ denote the transformation models constructed from ${Y}_s$ and ${Y}_g$, respectively. The notation $g \rightarrow s$ indicates that the generated samples are transformed using $\mathcal{T}_s$, whereas $s \rightarrow g$ denotes the reverse operation.
\subsection{QT Estimator}
\indent Given the acquired sample set ${Y}_s$, the proposed
QT estimator determines  $\boldsymbol{\theta}$ by matching ${Y}_s$ with the locally generated
sample set ${Y}_g$ in the
Gaussianized domain. Specifically, the parameter estimation
problem is formulated as
\begin{equation}
\widehat{\boldsymbol{\theta}}_{\mathrm{QT}}
=
\underset{\boldsymbol{\theta}}{\arg\min}\;
\mathcal{J}_{\mathrm{QT}}(\boldsymbol{\theta}),
\label{eq:QT_estimator}
\end{equation}
where
$\mathcal{J}_{\mathrm{QT}}(\boldsymbol{\theta})$ denotes the  objective function to be minimized. To construct
\(\mathcal{J}_{\mathrm{QT}}(\boldsymbol{\theta})\),
two QT models are first constructed separately from
\({Y}_s\) and
\({Y}_g\).
Let \(\widehat{F}_s(\cdot)\) and
\(\widehat{F}_g(\cdot)\) denote their
corresponding empirical CDFs obtained by \eqref{eq9}. Hence, the two QT models are expressed as
\begin{equation}
\mathcal{T}_s^{\mathrm{QT}}(y)
=
\Phi^{-1}\!\left[\widehat{F}_s(y)\right],
\label{eq:QT_model_s}
\end{equation}
and
\begin{equation}
\mathcal{T}_g^{\mathrm{QT}}(y)
=
\Phi^{-1}\!\left[
\widehat{F}_g(y)
\right].
\label{eq:QT_model_g}
\end{equation}
\indent As demonstrated by the KS-test results in Section II, the QT-transformed samples can be well approximated by the standard Gaussian distribution over a wide range of channel parameters. Therefore, even when $Y_s$ and $Y_g$ differ substantially in the original domain, their self-transformed distributions may remain similar, thereby weakening parameter identifiability. To enhance the sensitivity of the transformed distributions to their original mismatch, a cross-transformation strategy is adopted, i.e.,
the locally generated samples are transformed
using the QT model constructed from the acquired samples,
whereas the acquired samples are transformed using the QT
model constructed from the locally generated samples. \\
\indent Furthermore, in each transformation direction, the cumulative probability
of an input sample is evaluated using the QT model constructed
from the opposite sample set. When the input lies between two
adjacent ordered samples, its probability is estimated by simple linear interpolation between the corresponding plotting positions
and then mapped through the inverse standard Gaussian CDF. More specifically, let
\(y_{d,(1)}\leq y_{d,(2)}\leq\cdots\leq y_{d,(N_d)}\)
denote the ordered samples used to construct the QT model,
where \(d\in\{s,g\}\), and let
\(p_k=(k-0.5)/N_d\) denote the corresponding plotting
positions. For
\(y_{d,(k)}\leq y\leq y_{d,(k+1)}\), the interpolated
cumulative probability is given by
\begin{equation}
\widetilde F_d(y)
=
p_k+
\frac{y-y_{d,(k)}}
     {y_{d,(k+1)}-y_{d,(k)}}
\left(p_{k+1}-p_k\right).
\label{eq:QT_linear_interpolation}
\end{equation}

 The
resulting bidirectional cross-QT outputs are given by
\begin{equation}
z_{g\rightarrow s,i}
=
\Phi^{-1}
\left[
\widetilde F_s
\left(y_{g,i}\right)
\right],   i=1,2,\ldots,N_g
\label{eq:QT_g2s}
\end{equation}
and
\begin{equation}
z_{s\rightarrow g,i}
=
\Phi^{-1}
\left[
\widetilde F_g
\left(y_{s,i}\right)
\right], i=1,2,\ldots,N_s
\label{eq:QT_s2g}
\end{equation}
\indent The two cross-QT outputs in \eqref{eq:QT_g2s} and
\eqref{eq:QT_s2g} are then substituted into the standard
Gaussian PDF. The corresponding average negative
log-likelihoods are given by
\begin{equation}
\begin{aligned}
\mathcal{L}_{g\rightarrow s}^{\mathrm{QT}}
(\boldsymbol{\theta})
&=
-\frac{1}{N_g}
\sum_{i=1}^{N_g}
\log
\left[
\frac{1}{\sqrt{2\pi}}
\exp\left(
-\frac{z_{g\rightarrow s,i}^{2}}{2}
\right)
\right] \\
&=
\frac{1}{2}\log(2\pi)
+
\frac{1}{2N_g}
\sum_{i=1}^{N_g}
z_{g\rightarrow s,i}^{2},
\end{aligned}
\label{eq:QT_loss_g2s}
\end{equation}
and
\begin{equation}
\begin{aligned}
\mathcal{L}_{s\rightarrow g}^{\mathrm{QT}}
(\boldsymbol{\theta})
&=
-\frac{1}{N_s}
\sum_{i=1}^{N_s}
\log
\left[
\frac{1}{\sqrt{2\pi}}
\exp\left(
-\frac{z_{s\rightarrow g,i}^{2}}{2}
\right)
\right] \\
&=
\frac{1}{2}\log(2\pi)
+
\frac{1}{2N_s}
\sum_{i=1}^{N_s}
z_{s\rightarrow g,i}^{2}.
\end{aligned}
\label{eq:QT_loss_s2g}
\end{equation}
To ensure that both transformation directions are adequately
matched, the  bidirectional QT loss is defined by retaining
the larger directional negative log-likelihood:
\begin{equation}
\mathcal{L}_{\mathrm{QT}}(\boldsymbol{\theta})
=
\max
\left\{
\mathcal{L}_{g\rightarrow s}^{\mathrm{QT}}
(\boldsymbol{\theta}),
\mathcal{L}_{s\rightarrow g}^{\mathrm{QT}}
(\boldsymbol{\theta})
\right\}.
\label{eq:BQT_basic_loss}
\end{equation}
\indent It can be observed from \eqref{eq:QT_loss_g2s} and
\eqref{eq:QT_loss_s2g} that this loss can be regarded as a second-order matching criterion in the transformed domain. In general, second-order
matching alone is insufficient to fully characterize the
distributional discrepancy. Therefore,
the skewness and kurtosis discrepancies between the two
cross-QT outputs are further incorporated. Moreover, to improve the stability of the objective function, each loss
term is averaged over \(M\) independent realizations. The
repeated realizations also enable reliable estimation of the
variances of the skewness- and kurtosis-matching errors, which
are subsequently used to normalize the two higher-order loss
terms. As a result, the normalized terms have comparable
statistical scales, allowing a single regularization factor $\eta_{\mathrm{reg}}$ to
control their overall contribution.
The final
 loss function is formulated as
\begin{equation}
\begin{aligned}
\mathcal{J}_{\mathrm{QT}}(\boldsymbol{\theta})
=
\frac{1}{M}\sum_{m=1}^{M}
\Bigg\{
&\mathcal{L}_{\mathrm{QT}}^{(m)}
(\boldsymbol{\theta}) +\eta_{\mathrm{reg}}
\left[
\frac{\left(\Delta_{\mathrm{sk}}^{(m)}
(\boldsymbol{\theta})\right)^2}
{\sigma_{\mathrm{sk}}^{2}}\right.\\
&
\left.
+
\frac{\left(\Delta_{\mathrm{ku}}^{(m)}
(\boldsymbol{\theta})\right)^2}
{\sigma_{\mathrm{ku}}^{2}}
\right]
\Bigg\},
\end{aligned}
\label{eq:QT_final_loss}
\end{equation}
where
\begin{equation}
\begin{aligned}
\Delta_{\mathrm{sk}}^{(m)}(\boldsymbol{\theta})
&=
\gamma_{1}
\left(
\{z_{g\rightarrow s,i}^{(m)}\}_{i=1}^{N_g}
\right)
-
\gamma_{1}
\left(
\{z_{s\rightarrow g,i}^{(m)}\}_{i=1}^{N_s}
\right),\\
\Delta_{\mathrm{ku}}^{(m)}(\boldsymbol{\theta})
&=
\gamma_{2}
\left(
\{z_{g\rightarrow s,i}^{(m)}\}_{i=1}^{N_g}
\right)
-
\gamma_{2}
\left(
\{z_{s\rightarrow g,i}^{(m)}\}_{i=1}^{N_s}
\right).
\end{aligned}
\label{eq:QT_moment_difference}
\end{equation}
\begin{equation}
\begin{aligned}
\sigma_{\mathrm{sk}}^{2}(\boldsymbol{\theta})
&=
\frac{1}{M-1}
\sum_{m=1}^{M}
\left[
\Delta_{\mathrm{sk}}^{(m)}(\boldsymbol{\theta})
-
\overline{\Delta}_{\mathrm{sk}}(\boldsymbol{\theta})
\right]^2,\\
\sigma_{\mathrm{ku}}^{2}(\boldsymbol{\theta})
&=
\frac{1}{M-1}
\sum_{m=1}^{M}
\left[
\Delta_{\mathrm{ku}}^{(m)}(\boldsymbol{\theta})
-
\overline{\Delta}_{\mathrm{ku}}(\boldsymbol{\theta})
\right]^2,
\end{aligned}
\label{eq:QT_moment_variances}
\end{equation}
\indent
Then, the objective function in
\eqref{eq:QT_final_loss} is directly employed without the
physics-informed regularization term. For the GG channel, the  objective function is given by
\begin{equation}
\mathcal{J}_{\mathrm{QT}}^{\mathrm{GG}}(\alpha,\beta)
=
\mathcal{J}_{\mathrm{QT}}(\alpha,\beta)
+
\eta_{\mathrm{phy}}
\mathcal{L}_{\mathrm{phy}}(\alpha,\beta),
\label{eq:QT_GG_loss}
\end{equation}
where
\begin{equation}
\mathcal{L}_{\mathrm{phy}}(\alpha,\beta)
=
\left[
\alpha-g\!\left(h^{-1}(\beta)\right)
\right]^2,
\label{eq:GG_physical_loss}
\end{equation}
and \(\eta_{\mathrm{phy}}\) controls the contribution of the
physics-informed regularization term.
\subsection{Box–Cox  Estimator}
\indent Unlike the QT estimator, which is formulated based on
distribution matching, the Box–Cox estimator estimates the
channel parameters by maximizing an approximate log-likelihood
function.  For a  parameter vector
\(\boldsymbol{\theta}\), the locally generated sample set
\({Y}_g(\boldsymbol{\theta})\) is used to determine the
power and shift parameters, denoted by \(\lambda_g\) and
\(\xi_g\), respectively. The resulting Box–Cox transformation
is defined as
\begin{equation}
\mathcal{T}_{g}^{\mathrm{BC}}(y)
=
\begin{cases}
\dfrac{(y+\xi_g)^{\lambda_g}-1}{\lambda_g},
& \lambda_g\neq0,\\[2mm]
\ln(y+\xi_g),
& \lambda_g=0,
\end{cases}
\label{eq:BC_transform}
\end{equation}
where \(y+\xi_g>0\). Let
$
z_{g,i}^{\mathrm{BC}}
=
\mathcal{T}_{g}^{\mathrm{BC}}(y_{g,i}),
 i=1,\ldots,N_g,
$
denote the transformed  generated samples. Their mean
and variance are estimated as
\begin{equation}
\mu_g
=
\frac{1}{N_g}\sum_{i=1}^{N_g}z_{g,i}^{\mathrm{BC}},
\quad
\sigma_g^2
=
\frac{1}{N_g}\sum_{i=1}^{N_g}
\left(z_{g,i}^{\mathrm{BC}}-\mu_g\right)^2.
\label{eq:BC_Gaussian_parameters}
\end{equation}
Note that the transformed generated samples  are  approximated by
\(\mathcal{N}(\mu_g,\sigma_g^2)\) over a wide range of channel conditions, as shown in Fig.~\ref{Figure5}. \\
\indent Since \(\mathcal{T}_{g}^{\mathrm{BC}}(\cdot)\) is monotonic and
invertible, the density in the original domain can be obtained
through the change-of-variables formula. Noting that
\begin{equation}
\left|
\frac{\partial \mathcal{T}_{g}^{\mathrm{BC}}(y)}
{\partial y}
\right|
=
(y+\xi_g)^{\lambda_g-1},
\label{eq:BC_Jacobian}
\end{equation}
the approximate density of an acquired sample under the
candidate model is given by
\begin{equation}
\begin{aligned}
q(y\mid\boldsymbol{\theta})
={}&
\frac{1}{\sqrt{2\pi\sigma_g^2}}
\exp\left[
-\frac{
\left(\mathcal{T}_{g}^{\mathrm{BC}}(y)-\mu_g\right)^2
}{
2\sigma_g^2
}
\right]
(y+\xi_g)^{\lambda_g-1}.
\end{aligned}
\label{eq:BC_approximate_density}
\end{equation}
\indent Then, the average approximate
log-likelihood is
\begin{equation}
\begin{aligned}
\widetilde{\ell}_{\mathrm{BC}}
(\boldsymbol{\theta})
&={}
-\frac{1}{2}\ln(2\pi\sigma_g^2)-
\frac{1}{2N_s\sigma_g^2}
\sum_{i=1}^{N_s}
\left[
\mathcal{T}_{g}^{\mathrm{BC}}(y_{s,i})
-\mu_g
\right]^2\\
&+
\frac{\lambda_g-1}{N_s}
\sum_{i=1}^{N_s}
\ln(y_{s,i}+\xi_g).
\end{aligned}
\label{eq:BC_approximate_log_likelihood}
\end{equation}
It is worth noting that, when the Gaussian approximation in the
transformed domain is sufficiently accurate, the resulting
approximate log-likelihood is expected to closely match the exact
log-likelihood in the original domain, as shown in Fig.~\ref{Figure8}.

\begin{figure}[h]
  \centering
  \includegraphics[width=3in]{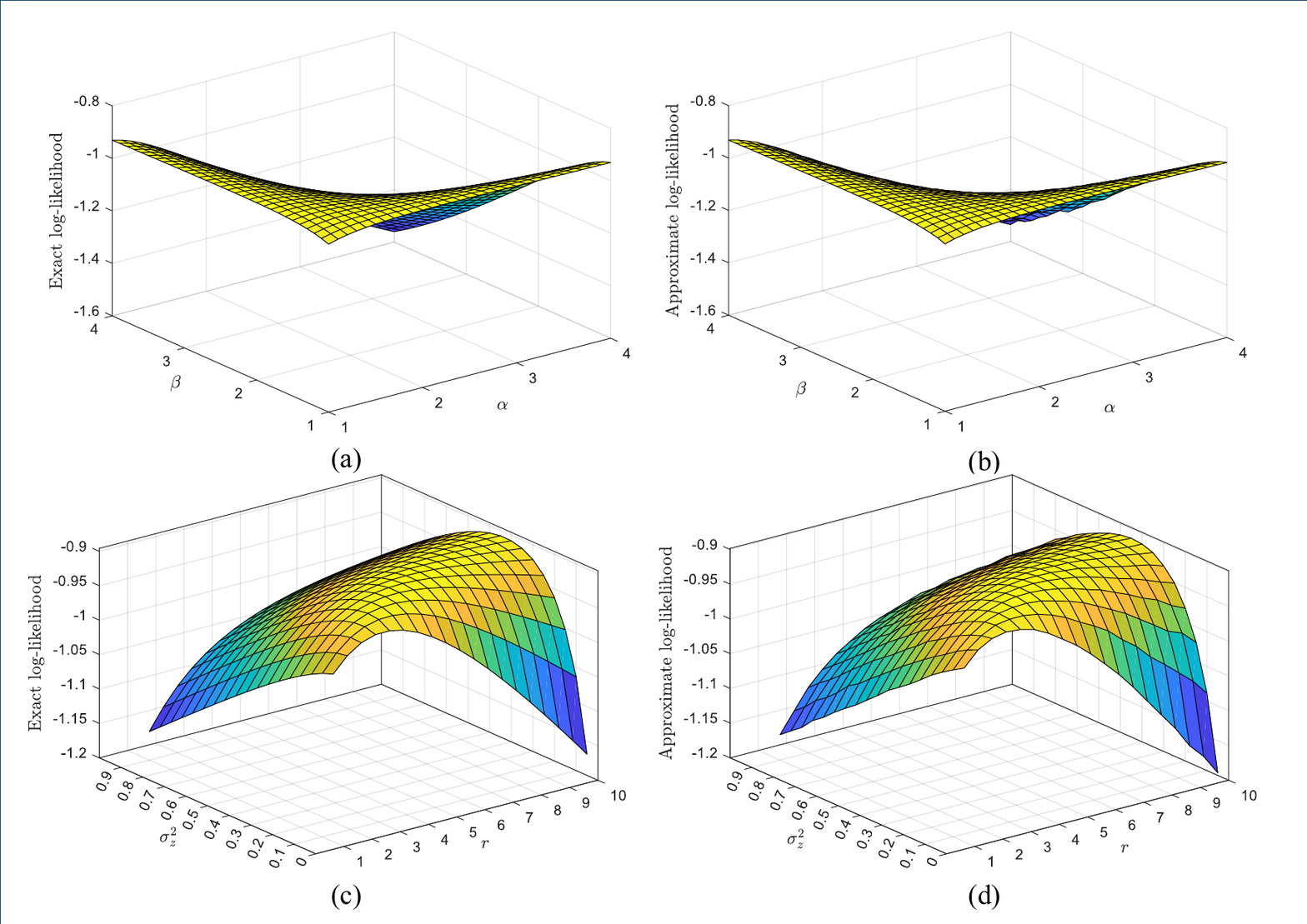}
  \caption{Exact and Box–Cox approximate average
log-likelihood surfaces under noiseless conditions:
(a) exact and (b) approximate results for the GG
channel with \(\alpha=2.34\) and \(\beta=1.02\);
(c) exact and (d) approximate results for the
LR channel with \(r=5\) and
\(\sigma_z^2=0.25\). Here, \(N_s=N_g=10^4\) and \(M=20\).}\label{Figure8}
\end{figure}
\indent Accordingly, the Box–Cox estimator is
obtained as
\begin{equation}
\widehat{\boldsymbol{\theta}}_{\mathrm{BC}}
=
\arg\max_{\boldsymbol{\theta}}
\widetilde{\ell}_{\mathrm{BC}}
(\boldsymbol{\theta}).
\label{eq:BC_estimator}
\end{equation}
Similarly, to reduce the randomness introduced by local sample generation, the approximate log-likelihood is averaged over \(M\)
independent realizations as
\begin{equation}
\mathcal{J}_{\mathrm{BC}}(\boldsymbol{\theta})
=
\frac{1}{M}
\sum_{m=1}^{M}
\widetilde{\ell}_{\mathrm{BC}}^{(m)}
(\boldsymbol{\theta}).
\label{eq:BC_final_objective}
\end{equation}
\indent Finally, the Box–Cox estimator is given by
\begin{equation}
\widehat{\boldsymbol{\theta}}_{\mathrm{BC}}
=
\arg\max_{\boldsymbol{\theta}}
\mathcal{J}_{\mathrm{BC}}(\boldsymbol{\theta}).
\label{eq:BC_final_estimator}
\end{equation}
For the GG channel, the physics-informed objective
function is given by
\begin{equation}
\mathcal{J}_{\mathrm{BC}}^{\mathrm{GG}}(\alpha,\beta)
=
\mathcal{J}_{\mathrm{BC}}(\alpha,\beta)
-
\eta_{\mathrm{phy}}
\mathcal{L}_{\mathrm{phy}}(\alpha,\beta).
\label{eq:BC_GG_objective}
\end{equation}
\section{Simulation Results}
In this section, computer simulations are conducted to evaluate the
 estimation performance of the proposed QT and
Box–Cox estimators for GG and LR
turbulence channels under different turbulence and noise conditions. Throughout the simulations, the locally generated and acquired sample
sets have the same size, i.e., $N_g=N_s=N$. The optimization problems are solved using the genetic algorithm,
with the population size fixed at $100$. \\
\indent For a fair comparison, the
same parameter search ranges are employed for both estimators. For the
GG channel, both $\alpha$ and $\beta$ are searched over
$[0,20]$, subject to $\alpha\geq\beta$. For the LR channel,
$r$ and $\sigma_z^2$ are searched over $[0,10]$ and $[0,1]$,
respectively. To reduce the
variability introduced by local random-sample generation, the
 objective function is averaged over $M=20$ independent
realizations. For the QT estimator, the regularization
coefficient is set to $\eta_{\mathrm{reg}}=10^{-2}$ and for the
GG channel, the coefficient of the physics-informed
regularizer is set to $\eta_{\mathrm{phy}}=1$.
A total of 35 independent trials are conducted to evaluate the
estimation performance in terms of the mean-square error (MSE) and
normalized mean-square error (NMSE). They are defined as
\begin{equation}
\operatorname{MSE}(\boldsymbol{\theta})
=
\mathbb{E}
\left[
\left(
\widehat{\boldsymbol{\theta}}-\boldsymbol{\theta}
\right)^2
\right],
\label{eq:mse_metric}
\end{equation}
and
\begin{equation}
\operatorname{NMSE}(\boldsymbol{\theta})
=
\frac{\operatorname{MSE}(\boldsymbol{\theta})}
{\boldsymbol{\theta}^2},
\label{eq:nmse_metric}
\end{equation}
where $\widehat{\boldsymbol{\theta}}$ denotes the estimate of
$\boldsymbol{\theta}$.

\begin{figure}[h]
  \centering
  \includegraphics[width=3.5in]{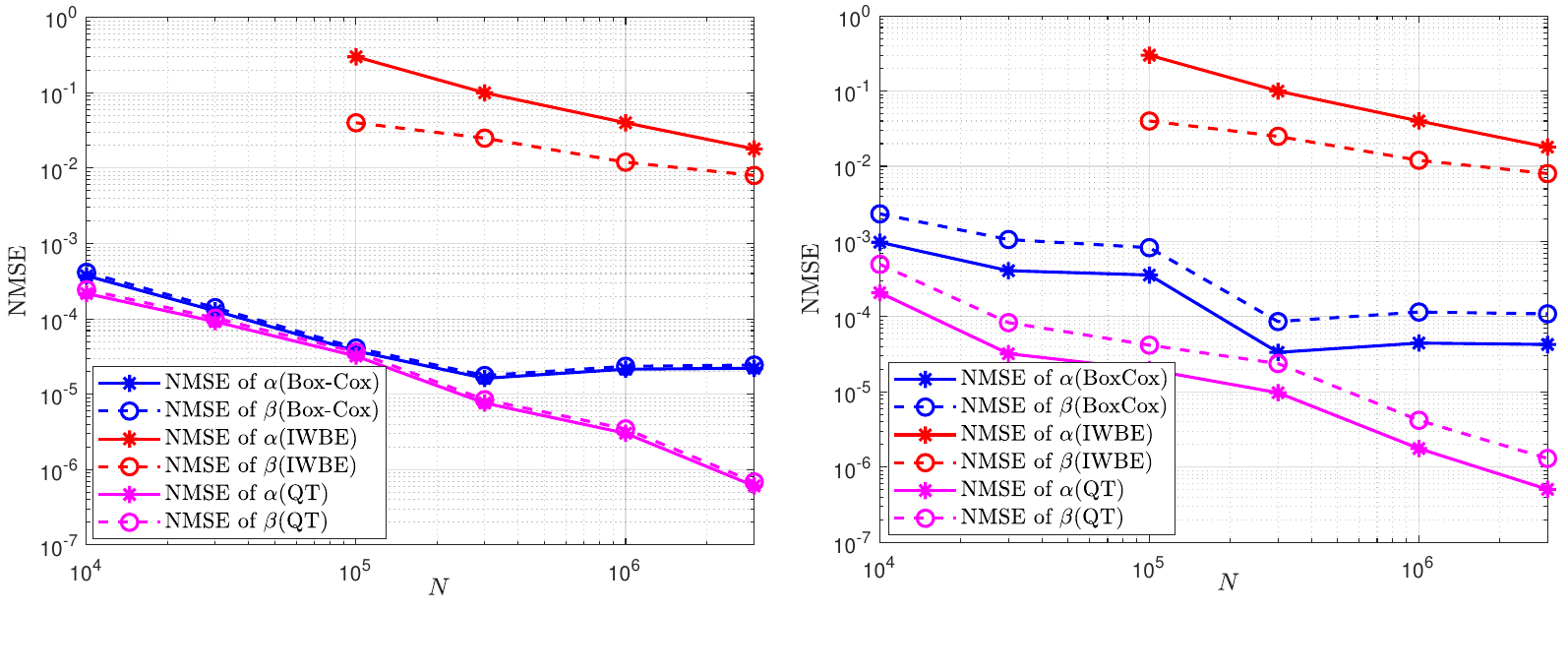}
  \caption{NMSEs of the GG parameter estimates versus the
  sample size at an SNR of $20$ dB,  where the SNR is defined as
$\mathrm{SNR}=\mathbb{E}[I^2]/\sigma_g^2$: (a) weak turbulence with
  $\alpha=21.5890$ and $\beta=19.8208$; (b) moderate turbulence with
  $\alpha=5.4054$ and $\beta=3.7776$.}
  \label{Figure9}
\end{figure}

\indent Fig.~\ref{Figure9} compares the parameter-estimation NMSEs of the
QT, Box–Cox, and IWBE estimators for the GG
channel at an SNR of $20$ dB. Under weak turbulence, the QT and Box–Cox estimators
provide comparable NMSEs at relatively small sample sizes and both
substantially outperform the IWBE. As the sample size increases, the
NMSEs of the QT estimator continue to decrease and reach the
order of $10^{-7}$ at $N=3\times10^6$. In contrast, the Box–Cox
estimator ceases to exhibit further improvement when $N$ exceeds
approximately $3\times10^5$ and approaches an estimation error floor on the order
of $10^{-5}$. This is due to the residual Gaussian approximation error, which dominates the estimation error at large sample sizes.

Under moderate turbulence, the performance difference between the two
proposed estimators becomes more pronounced. The QT estimator
maintains a consistent reduction in the NMSEs of both $\alpha$ and
$\beta$, ultimately achieving NMSEs on the orders of $10^{-7}$ and
$10^{-6}$, respectively, at $N=3\times10^6$. A similar error-floor
behavior can also be observed for the Box–Cox estimator when
the sample size reaches approximately $3\times10^5$, with the NMSEs of
$\alpha$ and $\beta$ leveling off at the orders of $10^{-5}$ and
$10^{-4}$, respectively. The IWBE exhibits considerably higher NMSEs
under both turbulence conditions. These results demonstrate that the
QT estimator is more robust to increased turbulence strength and
benefits more consistently from an increase in the sample size.

\begin{figure}[h]
    \centering
    \includegraphics[width=2.3in]{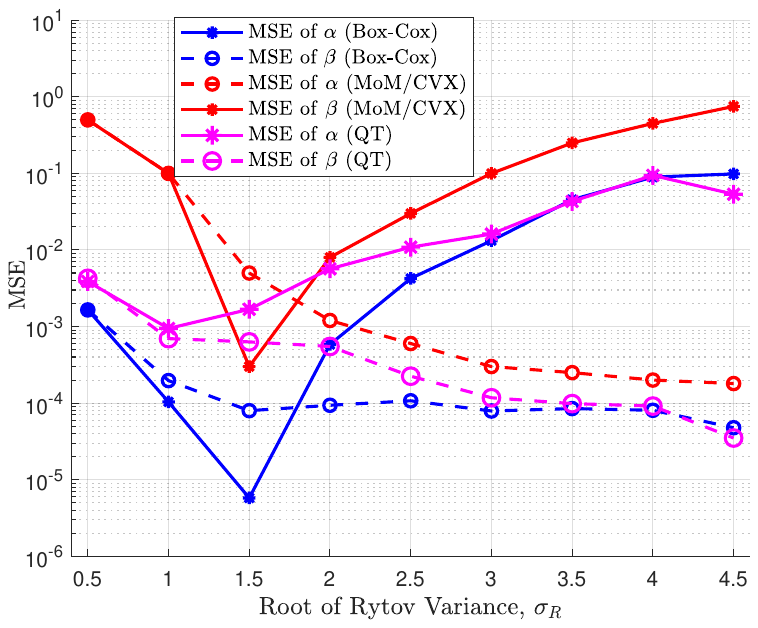}
    \caption{MSE comparison of the Box–Cox, MoM/CVX, and QT estimators versus the root of the Rytov variance $\sigma_R$ under noiseless conditions with $N = 10^5$.}
    \label{fig:noise_free_comparison}
\end{figure}

Figure~\ref{fig:noise_free_comparison} compares the MSEs of the Box–Cox, QT, and MoM/CVX estimators under noiseless conditions as $\sigma_R$ varies. Overall, the Box–Cox and QT estimators outperform the MoM/CVX estimator over most of the considered range. In particular, at $\sigma_R=1.5$, the Box–Cox estimator reduces the MSE of $\alpha$ by nearly two orders of magnitude. For estimating $\alpha$, the Box–Cox estimator performs better under weak-to-moderate turbulence, while the QT estimator becomes comparable at larger $\sigma_R$. For $\beta$, the two estimators provide similar MSEs.

\begin{figure}[h]
\centering
\includegraphics[width=3.5in]{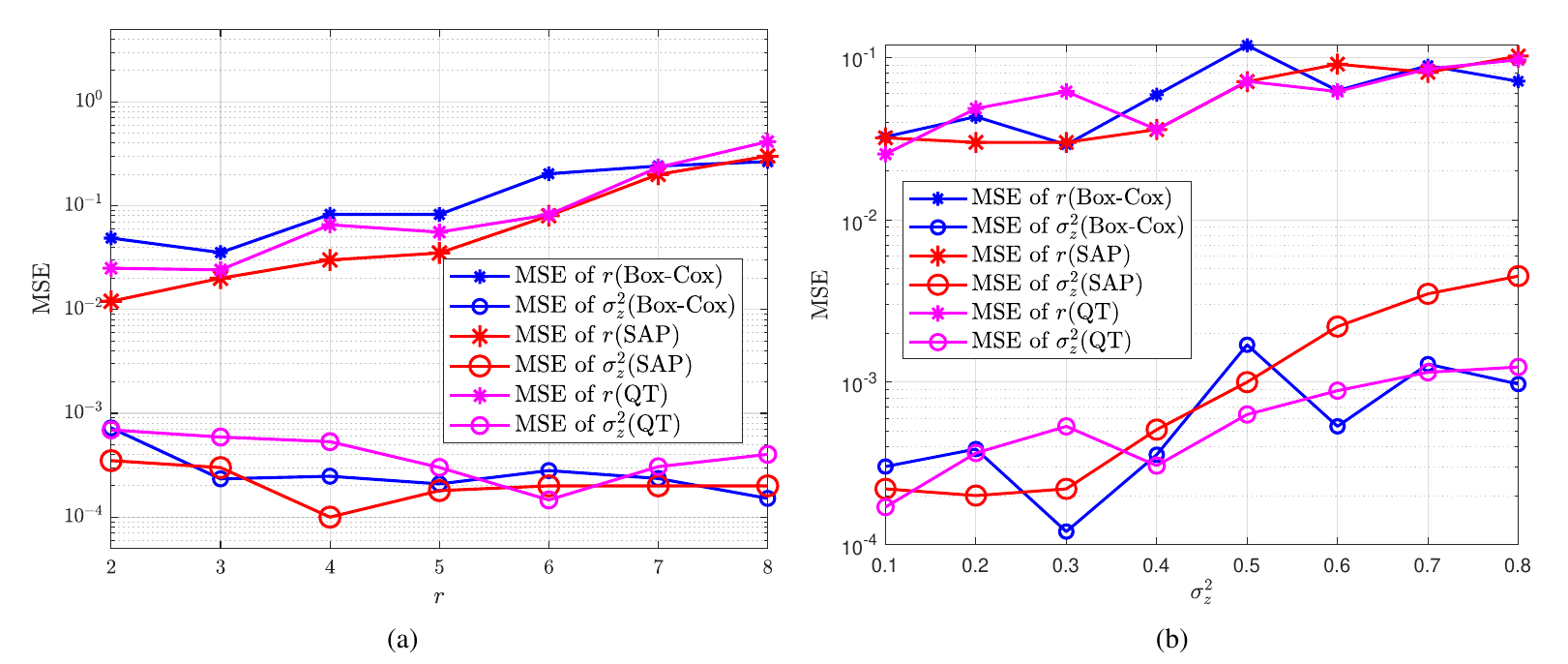}
\caption{MSEs of the LR parameter estimates obtained using the two proposed  estimators and the SAP estimator under  noiseless conditions with $N = 10^4$: (a) MSEs versus the coherence parameter $r$ for fixed $\sigma_z^2=0.25$; (b) MSEs versus the lognormal variance $\sigma_z^2$ for fixed $r=4$.}
\label{fig:LR_noiseless_MSE}
\end{figure}

\indent Fig.~\ref{fig:LR_noiseless_MSE} compares the proposed QT and Box–Cox estimators with the SAP estimator for the LR channel under noiseless conditions. As shown in Fig.~\ref{fig:LR_noiseless_MSE}(a), the three estimators achieve comparable MSE performance over the considered range of $r$. The MSE of the $r$ estimate generally increases with $r$, whereas that of the $\sigma_z^2$ estimate remains on the order of $10^{-4}$, indicating that $r$ becomes more difficult to estimate in the high-coherence regime. A similar conclusion can also be drawn from Fig.~\ref{fig:LR_noiseless_MSE}(b) as $\sigma_z^2$ varies.

\begin{figure}[h]
\centering
\includegraphics[width=2.3in]{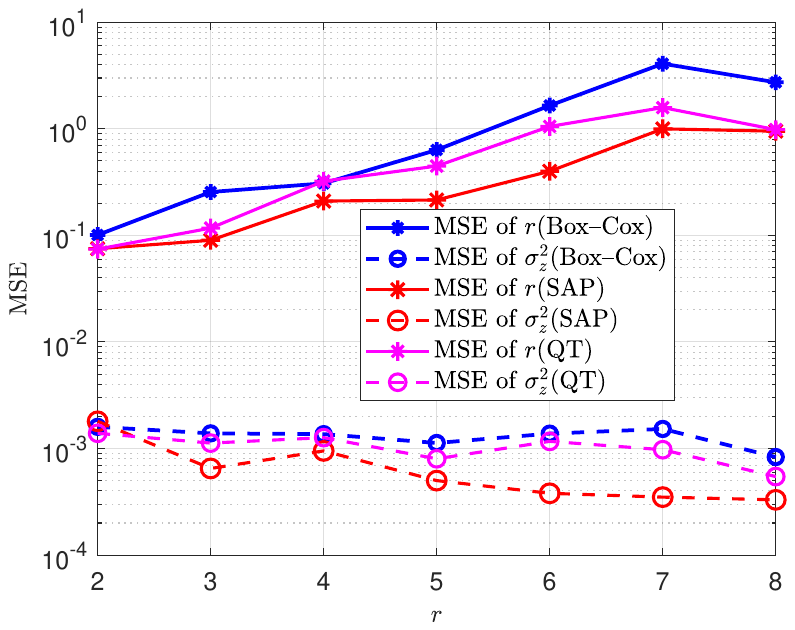}
\caption{MSEs of the LR parameter estimates obtained using the two proposed  estimators and the SAP estimator versus the coherence parameter $r$ at an SNR of $10$~dB, with $\sigma_z^2=0.25, N = 10^4$.}
\label{fig:LR_10dB_MSE}
\end{figure}
\indent To further assess the robustness against additive noise, Fig.~\ref{fig:LR_10dB_MSE} compares the three estimators at an SNR of $10$~dB. As $r$ increases, the three estimators initially exhibit comparable MSEs in estimating $r$. However, when $r>7$, the performance of the Box–Cox estimator deteriorates, resulting in a higher MSE than those of the QT and SAP estimators.
 By contrast, the MSEs of the $\sigma_z^2$ estimates remain on the order of $10^{-3}$ for all three methods. These results indicate that the performance degradation of the Box–Cox estimator under additive noise mainly affects the estimation of $r$ in the high-coherence regime, whereas the QT estimator maintains performance closer to that of the SAP estimator.

\section{Conclusions}
This paper has proposed two Gaussianization methods for estimating the parameters of GG and LR turbulence channels. The first employs QT with bidirectional cross-transformation and higher-order statistical matching, while the second uses the Box–Cox transformation to construct an approximate likelihood incorporating the transformation Jacobian. The KS-test results show that QT consistently produces approximately Gaussian samples under the considered turbulence and noise conditions, whereas Box–Cox achieves satisfactory Gaussianization mainly under weak-to-moderate turbulence. In addition, a physics-informed regularizer derived from the relationship between the channel parameters and the Rytov variance has been introduced to improve the identifiability of the GG parameters. Simulation results demonstrate that both estimators provide robust performance under various turbulence and noise conditions. In particular, the physics-informed regularization term can improve the parameter estimation performance of the GG channel  by at least three orders of magnitude compared with the IWME and MoM/CVX estimator in specific turbulence
scenarios.

\section*{APPENDIX A}

Before proving Proposition~\ref{propskewness1}, we first
establish the following auxiliary lemma, which is given by
\begin{lemma}
\label{lem:gamma_ratio_expansion}
For a fixed real number $q$ and sufficiently large $a$, the asymptotic expansion of $R_q(a)$ can be expressed as
\begin{align}
R_q(a)
& =
\frac{\Gamma(a+q)}{\Gamma(a)a^q}
=
1+\frac{q(q-1)}{2a}
+\frac{q(q-1)(q-2)(3q-1)}{24a^2}
\nonumber\\
&\quad
+\frac{q^2(q-1)^2(q-2)(q-3)}{48a^3}
+\mathcal{O}\!\left(a^{-4}\right).
\label{eq:gamma_ratio_expansion}
\end{align}
\end{lemma}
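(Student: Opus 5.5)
The plan is to take logarithms and use the Stirling asymptotic series for $\ln\Gamma$. For fixed real $q$ and $a\to\infty$ (so that $a+q>0$), we have
\begin{equation*}
\ln\Gamma(x)=\left(x-\tfrac12\right)\ln x-x+\tfrac12\ln(2\pi)+\frac{1}{12x}-\frac{1}{360x^{3}}+\mathcal{O}\!\left(x^{-5}\right).
\end{equation*}
Applying this at $x=a+q$ and $x=a$ and subtracting $q\ln a$ gives
\begin{equation*}
\ln R_q(a)=\left(a+q-\tfrac12\right)\ln\!\left(1+\frac{q}{a}\right)-q+\frac{1}{12}\left[\frac{1}{a+q}-\frac{1}{a}\right]-\frac{1}{360}\left[\frac{1}{(a+q)^{3}}-\frac{1}{a^{3}}\right]+\mathcal{O}\!\left(a^{-5}\right).
\end{equation*}
I will then expand every term in powers of $u=1/a$ up to $u^{3}$. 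This uses $\ln(1+qu)=qu-\tfrac{q^{2}u^{2}}{2}+\tfrac{q^{3}u^{3}}{3}-\tfrac{q^{4}u^{4}}{4}+\cdots$, where the $u^{4}$ term matters because it is multiplied by $a=u^{-1}$. It also uses $\frac{1}{a+q}-\frac1a=-qu^{2}+q^{2}u^{3}+\mathcal{O}(u^{4})$, while the $1/360$ bracket is already $\mathcal{O}(u^{4})$ and can be discarded.

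Collecting powers gives $\ln R_q(a)=c_1u+c_2u^{2}+c_3u^{3}+\mathcal{O}(u^{4})$. The first coefficient is
\begin{equation*}
c_1=q\left(q-\tfrac12\right)-\tfrac{q^{2}}{2}=\tfrac{q(q-1)}{2},
\end{equation*}
which confirms the leading term. The coefficients $c_2,c_3$ are polynomials in $q$ obtained in the same way. The second-order coefficient, for example, is
\begin{equation*}
c_2=-\tfrac{q^{2}(q-\frac12)}{2}+\tfrac{q^{3}}{3}-\tfrac{q}{12},
\end{equation*}
which simplifies to $c_2=-\tfrac{q(q-1)(2q-1)}{12}$.

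Next I will exponentiate:
\begin{equation*}
R_q(a)=1+c_1u+\left(c_2+\tfrac{c_1^{2}}{2}\right)u^{2}+\left(c_3+c_1c_2+\tfrac{c_1^{3}}{6}\right)u^{3}+\mathcal{O}(u^{4}).
\end{equation*}
I then simplify each bracket to the factored forms $\tfrac{q(q-1)(q-2)(3q-1)}{24}$ and $\tfrac{q^{2}(q-1)^{2}(q-2)(q-3)}{48}$. The remainder is uniform for fixed $q$, since all truncated series converge for $a>2|q|$ and the Stirling remainder is $\mathcal{O}(x^{-5})$.

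As an alternative route and a cross-check, I can appeal to the Tricomi--Erd\'elyi expansion
\begin{equation*}
\frac{\Gamma(a+q)}{\Gamma(a)}\sim a^{q}\sum_{n}\binom{q}{n}B_n^{(q+1)}(q)\,a^{-n},
\end{equation*}
where $B_n^{(q+1)}$ are generalized Bernoulli polynomials. Independently of either route, the final polynomials can be validated against exact cases. For $q=0,1$ we have $R_q=1$. For $q=2$ we have $R_2=1+1/a$, and the claimed $a^{-2}$ and $a^{-3}$ coefficients indeed vanish. For $q=3$ we have $R_3=1+3/a+2/a^{2}$, and the formula gives $3$, $2$, $0$. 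Since each coefficient $c_k$ is a polynomial in $q$ of degree at most $k+1$, so that each bracket after exponentiation has degree at most $2k$, matching at enough integer points (including e.g.\ $q=4,5$) pins the polynomials down completely.

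The main obstacle is the bookkeeping of the $a^{-3}$ coefficient. It combines $c_3$, which collects contributions from the $u^{4}$ term of $\ln(1+qu)$ times $a$, from the $-\tfrac12\ln(1+qu)$ piece, and from the $1/12$ correction, with the cross terms $c_1c_2+c_1^{3}/6$. Factoring the resulting degree-six polynomial into the stated form is error-prone. I would handle this by using the integer-point interpolation check above rather than by relying solely on hand expansion.
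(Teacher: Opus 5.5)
Your proposal is correct and follows the paper's proof essentially step for step. Both apply Stirling's series at $a+q$ and $a$, expand $\ln(1+q/a)$ and $1/(a+q)$ to get $\ln R_q(a)=c_1/a+c_2/a^2+c_3/a^3+\mathcal{O}(a^{-4})$ with $c_1=q(q-1)/2$, $c_2=-q(q-1)(2q-1)/12$, $c_3=q^2(q-1)^2/12$, and then exponentiate using $c_2+c_1^2/2$ and $c_3+c_1c_2+c_1^3/6$, which do factor into the stated forms.

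One small point concerns your optional interpolation cross-check. The $a^{-3}$ coefficient has degree $6$ in $q$, so matching at $q=0,\dots,5$ alone leaves it undetermined. You need a seventh point, such as $q=6$, or the leading coefficient $1/48$ coming from $c_1^3/6$.
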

\begin{IEEEproof}
According to the Stirling series, the $\ln\Gamma(\xi)$ function
has the following asymptotic expansion \cite[Eq.~(5.11.8)]{Olver2010}
\begin{equation}
\ln\Gamma(\xi)
=
\left(\xi-\frac{1}{2}\right)\ln\xi-\xi
+\frac{1}{2}\ln(2\pi)
+\frac{1}{12\xi}
-\frac{1}{360\xi^3}
+\mathcal{O}\!\left(\xi^{-5}\right).
\label{eq:stirling_log_gamma}
\end{equation}
\indent Applying \eqref{eq:stirling_log_gamma} to $\xi=a+q$ and
$\xi=a$, respectively, gives
\begin{align}
\ln R_q(a)
&={}
\left(a+q-\frac{1}{2}\right)\ln(a+q)
-\left(a-\frac{1}{2}\right)\ln a
\nonumber\\
&-q\ln a-q
+\frac{1}{12(a+q)}
-\frac{1}{12a}
+\mathcal{O}\!\left(a^{-4}\right).
\label{eq:log_gamma_ratio_initial}
\end{align}
\indent For sufficiently large $a$, we have
\begin{align}
\ln(a+q)
&=
\ln a+\frac{q}{a}
-\frac{q^2}{2a^2}
+\frac{q^3}{3a^3}
-\frac{q^4}{4a^4}
+\mathcal{O}\!\left(a^{-5}\right),
\label{eq:log_aq_expansion}\\
\frac{1}{a+q}
&=
\frac{1}{a}
-\frac{q}{a^2}
+\frac{q^2}{a^3}
+\mathcal{O}\!\left(a^{-4}\right).
\label{eq:reciprocal_aq_expansion}
\end{align}
\indent Substituting \eqref{eq:log_aq_expansion} and
\eqref{eq:reciprocal_aq_expansion} into
\eqref{eq:log_gamma_ratio_initial}, and performing  some
algebraic manipulations, we have
\begin{equation}
\begin{aligned}
\ln R_q(a)
&=
\frac{q(q-1)}{2a}
-\frac{q(q-1)(2q-1)}{12a^2}
\\
&+
\frac{q^2(q-1)^2}{12a^3}
+\mathcal{O}\!\left(a^{-4}\right).
\label{eq:log_gamma_ratio_final}
\end{aligned}
\end{equation}
For notational convenience, let
\begin{equation}
A=\frac{q(q-1)}{2},
B=-\frac{q(q-1)(2q-1)}{12},
C=\frac{q^2(q-1)^2}{12}.
\end{equation}
\indent Then, exponentiating \eqref{eq:log_gamma_ratio_final} and using the
Taylor expansion of the exponential function gives
\begin{align}
R_q(a)
&=
\exp\left(
\frac{A}{a}
+\frac{B}{a^2}
+\frac{C}{a^3}
+\mathcal{O}\!\left(a^{-4}\right)
\right)
\nonumber\\
&=
1+\frac{A}{a}
+\frac{B+A^2/2}{a^2}
+\frac{C+AB+A^3/6}{a^3}
+\mathcal{O}\!\left(a^{-4}\right).
\label{eq:exponential_gamma_ratio}
\end{align}
Substituting $A$, $B$, and $C$ into
\eqref{eq:exponential_gamma_ratio} yields \eqref{eq:gamma_ratio_expansion}, and this completes the proof.
\end{IEEEproof}
\begin{IEEEproof}[Proof of Proposition~\ref{propskewness1}]
For sufficiently large $\alpha, \beta$,  the
$q$-th  moment of GG random variable $I$ is approximated as
\begin{equation}
M(q) = \mathbb{E}\!\left[I^q\right]
=
\mathbb{E}\!\left[X^q\right]
\mathbb{E}\!\left[Y^q\right]
=
R_q(\tau\beta)R_q(\beta).
\label{eq:GG_moment_Rq}
\end{equation}
Substituting the expansion in
\eqref{eq:gamma_ratio_expansion} into
\eqref{eq:GG_moment_Rq}, and  performing some algebraic manipulations, then the second-, third-
order central moments are respectively obtained as
\begin{equation}
\sigma_{\mathrm{GG}}^2
=
\frac{\tau+1}{\tau\beta}
+\frac{1}{\tau\beta^2},
\label{eq:variance_GG_appendix}
\end{equation}
and
\begin{equation}
\mu_{3,\mathrm{GG}}
=
\frac{2(\tau^2+3\tau+1)}{\tau^2\beta^2}
+\mathcal{O}\!\left(\beta^{-3}\right).
\label{eq:third_central_GG}
\end{equation}
It follows from \eqref{eq:variance_GG_appendix} that
\begin{equation}
\sigma_{\mathrm{GG}}^3
=
\left(\frac{\tau+1}{\tau\beta}\right)^{3/2}
\left[1+\mathcal{O}\!\left(\beta^{-1}\right)\right].
\end{equation}
Therefore, the  skewness of $I$ is
\begin{equation}
\gamma_{1,\mathrm{GG}}
=
\frac{\mu_{3,\mathrm{GG}}}
{\sigma_{\mathrm{GG}}^3}
=
\frac{2(\tau^2+3\tau+1)}
{\sqrt{\tau}(\tau+1)^{3/2}}
\frac{1}{\sqrt{\beta}}
+\mathcal{O}\!\left(\beta^{-3/2}\right).
\label{eq:skewness_GG_appendix}
\end{equation}
Similarly, the fourth-order central moment satisfies
\begin{equation}
\begin{aligned}
\mu_{4,\mathrm{GG}}
={}&
\frac{3(\tau+1)^2}{\tau^2\beta^2}
+
\frac{6(\tau^3+7\tau^2+7\tau+1)}
{\tau^3\beta^3}
\nonumber+
\mathcal{O}\!\left(\beta^{-4}\right).
\label{eq:fourth_central_GG}
\end{aligned}
\end{equation}
Using \eqref{eq:variance_GG_appendix}, the corresponding
fourth-order cumulant becomes
\begin{align}
\kappa_{4,\mathrm{GG}}
&=
\mu_{4,\mathrm{GG}}-3\sigma_{\mathrm{GG}}^4
\nonumber\\
&=
\frac{6(\tau+1)(\tau^2+5\tau+1)}
{\tau^3\beta^3}
+
\mathcal{O}\!\left(\beta^{-4}\right).
\label{eq:fourth_cumulant_GG}
\end{align}
Consequently, the excess kurtosis is
\begin{align}
\gamma_{2,\mathrm{GG}}
&=
\frac{\kappa_{4,\mathrm{GG}}}
{\sigma_{\mathrm{GG}}^4}
\nonumber
=
\frac{6(\tau^2+5\tau+1)}
{\tau(\tau+1)}
\frac{1}{\beta}
+
\mathcal{O}\!\left(\beta^{-2}\right).
\label{eq:excess_kurtosis_GG_appendix}
\end{align}

\end{IEEEproof}
\section*{APPENDIX B}
{
\indent In this appendix, we prove that the LR distribution is positively skewed. According to (\ref{eq4}), the skewness of the LR distribution, $\gamma_{1,LN}$ is given by
\begin{equation}
\begin{aligned}
\gamma_{1,LN} &= \mathbb{E}\left[ \left( \frac{I - \mu}{\sigma_{LN}} \right)^3 \right] = \frac{\mu_{LN,3} - 3\mu_{LN,2} + 2}{(\sigma^2)^{3/2}} \\
&= \frac{1}{\sigma_{LN}^3} \left( \exp(3\sigma_z^2) B(r) - 3\exp(\sigma_z^2) A(r) + 2 \right).
\end{aligned}
\end{equation}
where $\sigma_{LN}$ is the standard deviation of the LR distribution, and
\begin{equation}
B(r) = \frac{r^3 + 9r^2 + 18r + 6}{(1 + r)^3}, \quad A(r) = \frac{r^2 + 4r + 2}{(1 + r)^2}.
\end{equation}
\indent Let $w = \exp(\sigma_z^2) > 0$. The expression for the numerator is defined as
\begin{equation}
f(r, w) = w^3 B(r) - 3w A(r) + 2.
\end{equation}
To prove that $f(r, w) > 0$ for all $r$ and $w$, we first observe that
\begin{equation}
\frac{B(r)}{A(r)} = \frac{r^3 + 9r^2 + 18r + 6}{r^3 + 5r^2 + 6r + 2} > 1.
\end{equation}

Taking the partial derivative of $f(r, w)$ with respect to $w$ and setting it to zero, we have
\begin{equation}
\frac{\partial f(r, w)}{\partial w} = 3w^2 B(r) - 3A(r) = 0.
\end{equation}
The critical point occurs at $w = \sqrt{A(r)/B(r)}$. Since $B(r)/A(r) > 1$, it follows that $0 < \sqrt{A(r)/B(r)} < 1$.

The function $f(r, w)$ is monotonically decreasing for $w \in \left( 0, \sqrt{A(r)/B(r)} \right]$ and monotonically increasing for $w \in \left[ \sqrt{A(r)/B(r)}, \infty \right)$. Given that $w = \exp(\sigma_z^2) \ge 1$, and $B(r)/A(r) > 1$, we have
\begin{equation}
\begin{aligned}
f(r, w) &= w^3 B(r) - 3w A(r) + 2 \\
&> f(r, 1) = B(r) - 3A(r) + 2 = \frac{6r + 2}{(1 + r)^3}.
\end{aligned}
\end{equation}
Since $r \ge 0$, $\frac{6r + 2}{(1 + r)^3} > 0$, it follows that $f(r, w) > 0$.
}
\section*{APPENDIX C}
\begin{IEEEproof}[Proof of Proposition~\ref{propskewness2}]
The $q$-th  moment of LR variable can be
written as
\begin{equation}
M(q)=M_{\mathrm{L}}(q)M_{\mathrm{R}}(q),
\end{equation}
where $M_{\mathrm{L}}(q)$ and $M_{\mathrm{R}}(q)$ denote the
$q$-th  moments of the Lognormal and Rician components,
respectively. Using \eqref{eq4} and expanding the Lognormal moment in powers of $\sigma_z^2$, we have
\begin{align}
M_{\mathrm{L}}(q)
&={}
1+\frac{q(q-1)}{2}\sigma_z^2
+\frac{q^2(q-1)^2}{8}\left(\sigma_z^2\right)^2
\nonumber\\
&+
\frac{q^3(q-1)^3}{48}\left(\sigma_z^2\right)^3
+\mathcal{O}\!\left(\left(\sigma_z^2\right)^4\right).
\label{eq:lognormal_moment_expansion}
\end{align}
Letting $t=1/({1+r})$, the Rician moment can be expressed as
\begin{equation}
M_{\mathrm{R}}(q)
=
(q!)^2
\sum_{j=0}^{q}
\frac{t^j(1-t)^{q-j}}
{j![(q-j)!]^2}.
\end{equation}
Expanding this expression around $t=0$ yields
\begin{align}
M_{\mathrm{R}}(q)
&={}
1+q(q-1)t
+\frac{q(q-1)(q^2-3q+1)}{2}t^2
\nonumber\\
&+
\frac{q(q-1)(q-2)
(q^3-6q^2+8q-1)}{6}t^3
+\mathcal{O}\!\left(t^4\right).
\label{eq:Rician_moment_expansion}
\end{align}

Multiplying \eqref{eq:lognormal_moment_expansion} and
\eqref{eq:Rician_moment_expansion}, we have
\begin{align}
M(q)
&={}
1+\frac{q(q-1)}{2}V_0
+\frac{q^2(q-1)^2}{8}V_0^2
\nonumber\\
&+
\frac{q(q-1)(1-2q)}{2}t^2
+\frac{q^3(q-1)^3}{48}V_0^3
\nonumber\\
&-
\frac{q^2(q-1)^2(2q-1)}{4}V_0t^2
\nonumber\\
&+
\frac{q(q-1)(5q^2-7q+1)}{3}t^3
+\mathcal{O}\!\left(V_0^4\right),
\label{eq:LR_raw_moment_third_order}
\end{align}
where $V_0=\sigma_z^2+2t$. Note that $\sigma_z^2$ and $t$ are both bounded by $V_0$, so all
fourth- and higher-order terms can be uniformly represented by
$\mathcal{O}(V_0^4)$. According to \eqref{eq:LR_raw_moment_third_order},  the
 second-, third-order central moments, and the  fourth-order cumulant
are
\begin{equation}
\begin{aligned}
\mu_{2,\mathrm{LR}}
&=
V_0\left[1+\mathcal{O}\!\left(V_0\right)\right],\\
\mu_{3,\mathrm{LR}}
&=
3\left(V_0^2-2t^2\right)
+\mathcal{O}\!\left(V_0^3\right),\\
\kappa_{4,\mathrm{LR}}
&=
\mu_{4,\mathrm{LR}}-3\mu_{2,\mathrm{LR}}^2\\
&=
8\left(
2V_0^3-9V_0t^2+5t^3
\right)
+\mathcal{O}\!\left(V_0^4\right).
\end{aligned}
\label{eq:LR_central_moment_asymptotic}
\end{equation}
Therefore,
\begin{equation}
\begin{aligned}
\gamma_{1,\mathrm{LR}}
&=
\frac{\mu_{3,\mathrm{LR}}}
{\mu_{2,\mathrm{LR}}^{3/2}}
=
3\sqrt{V_0}\left(1-2\rho^2\right)
+\mathcal{O}\!\left(V_0^{3/2}\right),
\\
\gamma_{2,\mathrm{LR}}
&=
\frac{\kappa_{4,\mathrm{LR}}}
{\mu_{2,\mathrm{LR}}^2}
=
8V_0\left(
2-9\rho^2+5\rho^3
\right)
+\mathcal{O}\!\left(V_0^2\right).
\label{eq:LR_skewness_kurtosis_proof}
\end{aligned}
\end{equation}
where
$
\rho
=
\frac{t}{V_0}
=
\frac{1}{2+(1+r)\sigma_z^2}
$. Since $0<\rho\leq 1/2$, we have
\begin{equation}
1 > 1-2\rho^2\geq\frac{1}{2},
\frac{3}{8}
\leq
2-9\rho^2+5\rho^3
<2.
\end{equation}
Hence, the leading coefficient in
\eqref{eq:LR_skewness_kurtosis_proof} remains bounded.
\end{IEEEproof}
\section*{APPENDIX D}
\begin{IEEEproof}
\emph{1) Skewness under the logarithmic transformation:}
Let
$
X_{\log}=\ln I$
and define its cumulant-generating function as
\begin{equation}
K_{\log}(q)
=
\ln\mathbb{E}\!\left[e^{qX_{\log}}\right]
=
\ln\mathbb{E}\!\left[I^q\right].
\label{eq:log_cumulant_function}
\end{equation}
The skewness of $X_{\log}$ is
\begin{equation}
\gamma_{1,\log}
=
\frac{K_{\log}^{(3)}(0)}
{\left[K_{\log}^{(2)}(0)\right]^{3/2}}.
\label{eq:log_skewness_cumulant}
\end{equation}
where  the superscript $(n)$ denotes the $n$-th derivative. Using \eqref{eqAdd1}, the cumulant-generating function of the
logarithmically transformed GG variable is given by
\begin{equation}
\begin{aligned}
K_{\log,\mathrm{GG}}(q)
&={}
\ln\Gamma(\alpha+q)-\ln\Gamma(\alpha)-q\ln\alpha
\\
&+
\ln\Gamma(\beta+q)-\ln\Gamma(\beta)-q\ln\beta.
\end{aligned}
\end{equation}
Then, according to \cite[Eq.~(5.15.1)]{Olver2010}, we have
\begin{equation}\label{eqAppendixD1}
K_{\log,\mathrm{GG}}^{(2)}(0)
=
\psi_1(\alpha)+\psi_1(\beta)>0,
\end{equation}
and
\begin{equation}\label{eqAppendixD2}
K_{\log,\mathrm{GG}}^{(3)}(0)
=
\psi_2(\alpha)+\psi_2(\beta)<0,
\end{equation}
where $\psi_m(\cdot)$ denotes the polygamma function of order
$m$. Substituting  (\ref{eqAppendixD1}) and (\ref{eqAppendixD2}) into (\ref{eq:log_skewness_cumulant}) gives
\begin{equation}
\gamma_{1,\log,\mathrm{GG}}<0.
\label{eq:GG_log_negative}
\end{equation}

For the LR distribution, the logarithm of the
irradiance is the sum of the logarithms of its independent
Lognormal and Rician components. Since the logarithm of the
Lognormal component is Gaussian, its third-order cumulant is
zero. Thus, the third-order log-cumulant is determined by the
Rician component. Then, according to \eqref{eq:LR_raw_moment_third_order}, we have
\begin{align}
K_{\log,\mathrm{LR}}(q)
&={}
\frac{q(q-1)}{2}V_0
+
\frac{q(q-1)(1-2q)}{2}t^2
\nonumber\\
&+
\frac{q(q-1)(5q^2-7q+1)}{3}t^3
+
\mathcal{O}\!\left(t^4\right),
\end{align}
It follows that
\begin{equation}\label{eqAppendixD3}
K_{\log,\mathrm{LR}}^{(2)}(0)
=
V_0+3t^2+\frac{16}{3}t^3
+\mathcal{O}\!\left(t^4\right)>0,
\end{equation}
and
\begin{align}\label{eqAppendixD4}
K_{\log,\mathrm{LR}}^{(3)}(0)
&=
-6t^2-24t^3+\mathcal{O}\!\left(t^4\right)
\nonumber\\
&=
-6t^2
\left[
1+4t+\mathcal{O}\!\left(t^2\right)
\right]
<0
\end{align}
Hence, substituting \eqref{eqAppendixD3} and \eqref{eqAppendixD4} into (\ref{eq:log_skewness_cumulant}) gives
\begin{equation}
\gamma_{1,\log,\mathrm{LR}}<0.
\label{eq:LR_log_negative}
\end{equation}

\emph{2) Theoretical values of the Box–Cox parameter:}
When $\lambda=1$, the one-parameter Box–Cox transformation
reduces to
\begin{equation}
T_1(I)=I-1,
\end{equation}
which has the same skewness as $I$. Combining \eqref{eq:GG_skewness_asymptotic} and \eqref{eq:LR_skewness_asymptotic} with \eqref{eq:GG_log_negative} and
\eqref{eq:LR_log_negative}, the
intermediate value theorem ensures that, for each
distribution, at least one zero-skewness value exists in
$
0<\lambda<1.
$ We may therefore exclude $\lambda=0$ and consider
$\lambda>0$. In this case,
\begin{equation}
T_\lambda(I)
=
\frac{I^\lambda-1}{\lambda}
\end{equation}
which has the same skewness as $I^\lambda$. Moreover,
\begin{equation}
\mathbb{E}\!\left[(I^\lambda)^k\right]
=
\mathbb{E}\!\left[I^{k\lambda}\right]
=
M(k\lambda).
\label{eq:boxcox_power_moment}
\end{equation}

Using \eqref{eq:boxcox_power_moment} and following derivations
analogous to those in the proofs of
Propositions~\ref{propskewness1} and
\ref{propskewness2}, the transformed skewnesses are obtained
as
\begin{align}
\gamma_{1,\mathrm{GG}}(\lambda)
={}&
\frac{
3\lambda(\tau+1)^2-(\tau^2+1)
}{
\sqrt{\tau}(\tau+1)^{3/2}
}
\frac{1}{\sqrt{\beta}}
+
\mathcal{O}\!\left(\beta^{-3/2}\right),
\label{eq:GG_boxcox_skewness}
\end{align}
and
\begin{equation}
\gamma_{1,\mathrm{LR}}(\lambda)
=
3\sqrt{V_0}
\left(
\lambda-2\rho^2
\right)
+
\mathcal{O}\!\left(V_0^{3/2}\right).
\label{eq:LR_boxcox_skewness}
\end{equation}
Therefore, setting the leading-order terms
\eqref{eq:GG_boxcox_skewness} and
\eqref{eq:LR_boxcox_skewness}  to zero gives
\begin{equation}
\lambda_{\mathrm{GG}}^\star
=
\frac{\tau^2+1}{3(\tau+1)^2}
\end{equation}
and
\begin{equation}
\lambda_{\mathrm{LR}}^\star
=
2\rho^2
=
\frac{2}
{\left[2+(1+r)\sigma_z^2\right]^2}.
\end{equation}
Since $\tau\geq1$ and $0<\rho\leq1/2$, these values satisfy
\begin{equation}
\frac{1}{6}
<
\lambda_{\mathrm{GG}}^\star
<
\frac{1}{3},
\quad
0<
\lambda_{\mathrm{LR}}^\star
<
\frac{1}{2}.
\end{equation}
\end{IEEEproof}

\bibliographystyle{IEEEtran}
\bibliography{sample}

\vfill

\end{document}